\documentclass{article} 

\usepackage[T1]{fontenc}
\usepackage{emptypage}
\usepackage{centernot}
\usepackage{array}
\usepackage{pst-grad}
\usepackage{pst-arrow}
\usepackage{pst-text}

\usepackage{latexsym}
\usepackage{amsfonts}
\usepackage{amssymb}
\usepackage{amsmath}
\usepackage{amscd}
\usepackage{eucal}
\usepackage{amsthm}
\usepackage{makeidx}

\usepackage{pstricks}
\usepackage{pst-node}
\usepackage{verbatim} 
\usepackage{graphics}
\usepackage{graphicx}
\usepackage{color}   
\usepackage{framed}

\usepackage{bbm}
\usepackage{pst-plot}
\usepackage{pstricks-add}
\usepackage{hyperref}

\usepackage{xcolor}
\usepackage{mdframed}

\def\1{\ensuremath{\mathbbm{1}}}%
\def\2{\ensuremath{\mathbbm{2}}}%

\newcommand{\E}{\ensuremath{\bb{E}}}

\newcommand{\eps}{\endpspicture}
\newcommand{\ps}{\pspicture}

\newcommand{\eh}{Eckmann--Hilton}
\newcommand{\eha}{the Eckmann--Hilton argument}

\newcommand{\catequiv}{\simeq} 

\newcommand{\scr}{\scriptsize}

\renewcommand{\:}{\colon}

\newcommand{\noi}{\noindent}
\newcommand{\dend}{\ensuremath{\hfill \maltese}\end{definition}}

\newcommand{\cat}[1]{\ensuremath{\textrm{\bfseries {\sf {#1}}}}}

\newcommand{\Cat}{{\cat{Cat}}}

\newcommand{\Mon}{{\cat{Mon}}}

\newcommand{\bb}[1]{\ensuremath{\mathbb {#1}}}
\newcommand{\ed}{\end{document}}

\newcommand{\bq}{\begin{quote}}
\newcommand{\eq}{\end{quote}}
\newcommand{\bc}{\begin{center}}
\newcommand{\ec}{\end{center}}
\newcommand{\bmp}{\noi\begin{minipage}}
\newcommand{\emp}{\end{minipage}}

\newcommand{\bfr}{\begin{flushright}}
\newcommand{\efr}{\end{flushright}}

\newcommand{\lra}{\tra}

\newcommand{\vs}[1]{\vspace*{#1em}}
\newcommand{\hs}[1]{\hspace*{#1em}}

\newcommand{\polar}{\SpecialCoor} 

\newcommand{\tra}{{\psset{unit=0.1cm,nodesep=0pt} \pspicture(8,0)
\pcline{->}(1,1.1)(7,1.1) \endpspicture}}

\newcommand{\tramap}[1]{{\psset{unit=0.1cm,nodesep=0pt,labelsep=1pt} \pspicture(8,4)
\pcline{->}(1,1)(7,1)\naput[npos=0.45]{\ensuremath{\scriptstyle{#1}}} \endpspicture}}

\newcommand{\tmap}{\tramap}

\newcommand{\tlmap}[1]{{\psset{unit=0.1cm,nodesep=0pt,labelsep=1pt} \pspicture(8,4)
\pcline{<-}(1,1)(7,1)\naput[npos=0.55]{\ensuremath{\scriptstyle{#1}}} \endpspicture}}

\newcommand{\e}{\hs{0.4}=\hs{0.4}}

\newcommand{\degree}{\ensuremath{^\circ}}

\newcommand{\myps}{
\begin{small}
\pspicture
}

\newcommand{\emyps}{
\endpspicture
\end{small}
}

\newcommand\Tstrut{\rule{0pt}{2.4ex}}         

\newcommand{\myhline}{\hline\Tstrut}  

\newcommand{\vtp}[2]{\ensuremath{\setlength{\arraycolsep}{0.1ex}\hs{-0.1}\begin{array}{c}#1\\ \myhline #2 \end{array}\hs{-0.1}}}

\newcommand{\vvtp}[2]{\ensuremath{\setlength{\arraycolsep}{0.1ex}\renewcommand{\arraystretch}{1.4}\hs{-0.1}\begin{array}{c}#1\\ \myhline #2 \end{array}\hs{-0.1}}}

\newcommand{\htp}[2]{\ensuremath{\setlength{\arraycolsep}{0.6ex}\renewcommand{\arraystretch}{1.08}\hs{-0.2}\begin{array}{c|c}#1 & #2\end{array}\hs{-0.2}}}

\newcommand{\hhtp}[2]{\ensuremath{\setlength{\arraycolsep}{0.6ex}
\renewcommand{\arraystretch}{1.4}\hs{-0.2}
\begin{array}{c|c}#1 & #2\end{array}\hs{-0.2}}}

\newcommand{\threehtp}[3]{\ensuremath{\setlength{\arraycolsep}{0.6ex}\hs{-0.2}\begin{array}{c|c|c}#1 & #2 & #3\end{array}\hs{-0.2}}}

\newcommand{\threehtpscr}[3]{\ensuremath{\setlength{\arraycolsep}{0.6ex}\hs{-0.2}\begin{scriptsize}\begin{array}{c|c|c}#1 & #2 & #3\end{array}\end{scriptsize}\hs{-0.2}}}

\newcommand{\fourtp}[4]{\ensuremath{\setlength{\arraycolsep}{0.6ex}\hs{-0.2}\begin{array}{c|c}#1 & #2 \\ \myhline #3 & #4\end{array}\hs{-0.2}}}

\newcommand{\sixtp}[6]{\ensuremath{\setlength{\arraycolsep}{0.6ex}\hs{-0.2}\begin{array}{c|c|c}#1 & #2 & #3 \\ \myhline #4 & #5 & #6\end{array}\hs{-0.2}}}

\newlength\myfntht

\makeindex

\numberwithin{equation}{section}

\theoremstyle{plain}

\newtheorem{theorem}{Theorem}[section]

\newtheorem{proposition}[theorem]{Proposition}

\newtheorem{corollary}[theorem]{Corollary}

\newtheorem{lemma}[theorem]{Lemma}

\theoremstyle{definition}

\newtheorem{definition}[theorem]{Definition}

\newtheorem{example}[theorem]{Example}

\newtheorem{examples}[theorem]{Examples}
\newtheorem{nonexample}[theorem]{Non-example}
\newtheorem{remark}[theorem]{Remark}
\newtheorem{remarks}[theorem]{Remarks}
\newtheorem{exercise}[theorem]{Exercise}
\newtheorem{note}[theorem]{Note}
\newtheorem{question}[theorem]{Question}
\newtheorem{questions}[theorem]{Questions}
\newtheorem{algorithm}[theorem]{Algorithm}
\newtheorem{method}[theorem]{Method}

\renewcommand{\arraystretch}{1.05} 
\psset{
unit=1mm,
linewidth=0.7pt,
arrowlength=1.1, 
arrowsize=3pt 2,
arrowinset=0.7,
doublesep=1pt,
labelsep=2pt,
nodesep=2pt, 
dash=2pt 2pt}
  
\newcommand{\cubeunit}{\psset{unit=0.35mm}}

\newcommand{\cubebone}
{\cubeunit\pspicture(28,28)

\psframe(0,0)(20,20)
\psline(0,20)(10,27)(28,27)(28,9)(20,0)
\psline(20,20)(28,27)

\psline(10,0)(10,20)(19,27)

\psline(0,10)(20,10)(28,18)

\psline(5,23.5)(24,23.5)(24,4.5)

\rput(15,15){\scr $a$} 

\rput(-6,11){\scr $b$}

\eps}

\newcommand{\cubebtwo}
{\cubeunit\pspicture(28,28)

\psframe(0,0)(20,20)
\psline(0,20)(10,27)(28,27)(28,9)(20,0)
\psline(20,20)(28,27)

\psline(10,0)(10,20)(19,27)

\psline(5,23.5)(24,23.5)(24,4.5)

\rput[B](15,8.5){\scr $a$} 

\rput(12,25.2){\scr $b$} 

\eps}

\newcommand{\cubebthree}
{\cubeunit\pspicture(28,28)

\psframe(0,0)(20,20)
\psline(0,20)(10,27)(28,27)(28,9)(20,0)
\psline(20,20)(28,27)

\psline(10,0)(10,20)(19,27)

\psline(0,10)(20,10)(28,18)

\psline(5,23.5)(24,23.5)(24,4.5)

\rput(15,5){\scr $a$} 

\rput(12,25.2){\scr $b$} 

\eps}

\newcommand{\cubebfour}
{\cubeunit\pspicture(28,28)

\psframe(0,0)(20,20)
\psline(0,20)(10,27)(28,27)(28,9)(20,0)
\psline(20,20)(28,27)

\psline(0,10)(20,10)(28,18)

\psline(5,23.5)(24,23.5)(24,4.5)

\rput(10,5){\scr $a$} 

\rput(26,20){\scr $b$} 

\eps}

\newcommand{\cubebfive}
{\cubeunit\pspicture(28,28)

\psframe(0,0)(20,20)
\psline(0,20)(10,27)(28,27)(28,9)(20,0)
\psline(20,20)(28,27)

\psline(10,0)(10,20)(19,27)

\psline(0,10)(20,10)(28,18)

\psline(5,23.5)(24,23.5)(24,4.5)

\rput(5,5){\scr $a$} 

\rput(26,20){\scr $b$} 

\eps}

\newcommand{\cubebsix}
{\cubeunit\pspicture(28,28)

\psframe(0,0)(20,20)
\psline(0,20)(10,27)(28,27)(28,9)(20,0)
\psline(20,20)(28,27)

\psline(10,0)(10,20)(19,27)

\psline(5,23.5)(24,23.5)(24,4.5)

\rput[B](5,8.5){\scr $a$} 

\rput(26,15.5){\scr $b$} 

\eps}

\newcommand{\cubebseven}
{\cubeunit\pspicture(28,28)

\psframe(0,0)(20,20)
\psline(0,20)(10,27)(28,27)(28,9)(20,0)
\psline(20,20)(28,27)

\psline(10,0)(10,20)(19,27)

\psline(0,10)(20,10)(28,18)

\psline(5,23.5)(24,23.5)(24,4.5)

\rput(5,15){\scr $a$} 

\rput(26,11){\scr $b$} 

\eps}

\newcommand{\cubebeight}
{\cubeunit\pspicture(28,28)

\psframe(0,0)(20,20)
\psline(0,20)(10,27)(28,27)(28,9)(20,0)
\psline(20,20)(28,27)

\psline(0,10)(20,10)(28,18)

\psline(5,23.5)(24,23.5)(24,4.5)

\rput(10,15){\scr $a$} 

\rput(26,11){\scr $b$} 

\eps}

\newcommand{\cubeaone}
{\cubeunit\pspicture(28,28)

\psframe(0,0)(20,20)
\psline(0,20)(10,27)(28,27)(28,9)(20,0)
\psline(20,20)(28,27)

\psline(10,0)(10,20)(19,27)        
\psline(0,10)(20,10)(28,18)        

\rput(5,5){\scr $b$} 
\rput(15,15){\scr $a$} 

\eps}

\newcommand{\cubeathree}
{\cubeunit\pspicture(28,28)

\psframe(0,0)(20,20)
\psline(0,20)(10,27)(28,27)(28,9)(20,0)
\psline(20,20)(28,27)

\psline(10,0)(10,20)(19,27)        
\psline(0,10)(20,10)(28,18)        

\rput(15,5){\scr $a$} 
\rput(5,15){\scr $b$} 

\eps}

\newcommand{\cubeafive}
{\cubeunit\pspicture(28,28)

\psframe(0,0)(20,20)
\psline(0,20)(10,27)(28,27)(28,9)(20,0)
\psline(20,20)(28,27)

\psline(10,0)(10,20)(19,27)        
\psline(0,10)(20,10)(28,18)        

\rput(5,5){\scr $a$} 
\rput(15,15){\scr $b$} 

\eps}

\newcommand{\cubeaseven}
{\cubeunit\pspicture(28,28)

\psframe(0,0)(20,20)
\psline(0,20)(10,27)(28,27)(28,9)(20,0)
\psline(20,20)(28,27)

\psline(10,0)(10,20)(19,27)        
\psline(0,10)(20,10)(28,18)        

\rput(15,5){\scr $b$} 
\rput(5,15){\scr $a$} 

\eps}

\newcommand{\cubeatwo}
{\cubeunit\pspicture(28,28)

\psframe(0,0)(20,20)
\psline(0,20)(10,27)(28,27)(28,9)(20,0)
\psline(20,20)(28,27)

\psline(10,0)(10,20)(19,27)        

\rput[B](5,8.5){\scr $b$} 
\rput[B](15,8.5){\scr $a$} 

\eps}

\newcommand{\cubeasix}
{\cubeunit\pspicture(28,28)

\psframe(0,0)(20,20)
\psline(0,20)(10,27)(28,27)(28,9)(20,0)
\psline(20,20)(28,27)

\psline(10,0)(10,20)(19,27)        

\rput[B](5,8.5){\scr $a$} 
\rput[B](15,8.5){\scr $b$} 

\eps}

\newcommand{\cubeafour}
{\cubeunit\pspicture(28,28)

\psframe(0,0)(20,20)
\psline(0,20)(10,27)(28,27)(28,9)(20,0)
\psline(20,20)(28,27)

\psline(0,10)(20,10)(28,18)        

\rput(10,15){\scr $b$} 
\rput(10,5){\scr $a$} 

\eps}

\newcommand{\cubeaeight}
{\cubeunit\pspicture(28,28)

\psframe(0,0)(20,20)
\psline(0,20)(10,27)(28,27)(28,9)(20,0)
\psline(20,20)(28,27)

\psline(0,10)(20,10)(28,18)        

\rput(10,15){\scr $a$} 
\rput(10,5){\scr $b$} 

\eps}

\newcommand{\cubec}
{\cubeunit\pspicture(28,28)

\psframe(0,0)(20,20)
\psline(0,20)(10,27)(28,27)(28,9)(20,0)
\psline(20,20)(28,27)

\psline(5,23.5)(24,23.5)(24,4.5) 

\rput(10,10){\scr $a$} 

\rput(26,15.5){\scr $b$} 

\eps}

\newcommand{\cubemapunit}{\psset{unit=0.3mm,arrows=-}}

\newcommand{\cubemapsix}
{\cubemapunit\pspicture(28,28)

\psframe(0,0)(20,20)
\psline(0,20)(10,27)(28,27)(28,9)(20,0)
\psline(20,20)(28,27)

\psline(0,10)(20,10)(28,18)        

\rput(10,15){\scr $r$} 
\rput(10,5){\scr $l$} 

\eps}

\newcommand{\cubemapsixb}
{\cubemapunit\pspicture(28,28)

\psframe(0,0)(20,20)
\psline(0,20)(10,27)(28,27)(28,9)(20,0)
\psline(20,20)(28,27)

\psline(0,10)(20,10)(28,18)        

\rput(10,15){\scr $l$} 
\rput(10,5){\scr $r$} 

\eps}

\newcommand{\cubemapone}
{\cubemapunit\pspicture(28,28)

\psframe(0,0)(20,20)
\psline(0,20)(10,27)(28,27)(28,9)(20,0)
\psline(20,20)(28,27)

\psline(0,10)(20,10)(28,18)        

\rput(10,15){\scr $k$} 
\rput(10,5){\scr $f$} 

\eps}

\newcommand{\cubemapfour}
{\cubemapunit\pspicture(28,28)

\psframe(0,0)(20,20)
\psline(0,20)(10,27)(28,27)(28,9)(20,0)
\psline(20,20)(28,27)

\psline(10,0)(10,20)(19,27)        

\rput[B](5,8.5){\scr $k$} 
\rput[B](15,8.5){\scr $f$} 

\eps}

\newcommand{\cubemapfive}
{\cubemapunit\pspicture(28,28)

\psframe(0,0)(20,20)
\psline(0,20)(10,27)(28,27)(28,9)(20,0)
\psline(20,20)(28,27)

\psline(10,0)(10,20)(19,27)        

\rput[B](5,8.5){\scr $\beta$} 
\rput[B](15,8.5){\scr $\tau$} 

\eps}

\newcommand{\cubemapfiveb}
{\cubemapunit\pspicture(28,28)

\psframe(0,0)(20,20)
\psline(0,20)(10,27)(28,27)(28,9)(20,0)
\psline(20,20)(28,27)

\psline(10,0)(10,20)(19,27)        

\rput[B](5,8.5){\scr $\tau$} 
\rput[B](15,8.5){\scr $\beta$} 

\eps}

\newcommand{\cubemaptwo}
{\cubemapunit\pspicture(28,28)

\psframe(0,0)(20,20)
\psline(0,20)(10,27)(28,27)(28,9)(20,0)
\psline(20,20)(28,27)

\psline(5,23.5)(24,23.5)(24,4.5) 

\rput(10,10){\scr $\beta$} 

\rput(26,15.5){\scr $\tau$} 

\eps}

\newcommand{\cubemapthree}
{\cubemapunit\pspicture(28,28)

\psframe(0,0)(20,20)
\psline(0,20)(10,27)(28,27)(28,9)(20,0)
\psline(20,20)(28,27)

\psline(5,23.5)(24,23.5)(24,4.5) 

\rput(10,10){\scr $r$} 

\rput(26,15.5){\scr $l$} 

\eps}

\newcommand{\cubemapnine}
{\cubemapunit\pspicture(28,28)

\psframe(0,0)(20,20)
\psline(0,20)(10,27)(28,27)(28,9)(20,0)
\psline(20,20)(28,27)

\psline(10,0)(10,20)(19,27)        
\psline(0,10)(20,10)(28,18)        

\rput(15,5){\scr $f$} 
\rput(5,15){\scr $k$} 

\eps}

\newcommand{\cubemapseven}
{\cubemapunit\pspicture(28,28)

\psframe(0,0)(20,20)
\psline(0,20)(10,27)(28,27)(28,9)(20,0)
\psline(20,20)(28,27)

\psline(0,10)(20,10)(28,18)

\psline(5,23.5)(24,23.5)(24,4.5)

\rput(10,15){\scr $r$} 

\rput(26,11){\scr $l$} 

\eps}

\newcommand{\cubemapeight}
{\cubemapunit\pspicture(28,28)

\psframe(0,0)(20,20)
\psline(0,20)(10,27)(28,27)(28,9)(20,0)
\psline(20,20)(28,27)

\psline(10,0)(10,20)(19,27)

\psline(5,23.5)(24,23.5)(24,4.5)

\rput[B](5,8.5){\scr $\beta$} 

\rput(26,15.5){\scr $\tau$} 

\eps}

\newcommand{\cubealeft}
{\cubeunit\pspicture(28,28)

\psframe(0,0)(20,20)
\psline(0,20)(10,27)(28,27)(28,9)(20,0)
\psline(20,20)(28,27)

\psline(10,0)(10,20)(19,27)        

\rput[B](5,8.5){\scr $a$} 

\eps}

\newcommand{\cubearight}
{\cubeunit\pspicture(28,28)

\psframe(0,0)(20,20)
\psline(0,20)(10,27)(28,27)(28,9)(20,0)
\psline(20,20)(28,27)

\psline(10,0)(10,20)(19,27)        

\rput[B](15,8.5){\scr $a$} 

\eps}

\newcommand{\cubeatop}
{\cubeunit\pspicture(28,28)

\psframe(0,0)(20,20)
\psline(0,20)(10,27)(28,27)(28,9)(20,0)
\psline(20,20)(28,27)

\psline(0,10)(20,10)(28,18)        

\rput(10,15){\scr $a$} 

\eps}

\newcommand{\cubeabottom}
{\cubeunit\pspicture(28,28)

\psframe(0,0)(20,20)
\psline(0,20)(10,27)(28,27)(28,9)(20,0)
\psline(20,20)(28,27)

\psline(0,10)(20,10)(28,18)        

\rput(10,5){\scr $a$} 

\eps}

\newcommand{\cubeafront}
{\cubeunit\pspicture(28,28)

\psframe(0,0)(20,20)
\psline(0,20)(10,27)(28,27)(28,9)(20,0)
\psline(20,20)(28,27)

\psline(5,23.5)(24,23.5)(24,4.5) 

\rput(10,10){\scr $a$} 

\eps}

\newcommand{\cubeaback}
{\cubeunit\pspicture(28,28)

\psframe(0,0)(20,20)
\psline(0,20)(10,27)(28,27)(28,9)(20,0)
\psline(20,20)(28,27)

\psline(5,23.5)(24,23.5)(24,4.5) 

\rput(26,15.5){\scr $a$} 

\eps}

\newcommand{\cubeatotal}
{\cubeunit\pspicture(28,28)

\psframe(0,0)(20,20)
\psline(0,20)(10,27)(28,27)(28,9)(20,0)
\psline(20,20)(28,27)

\rput(10,10){\scr $a$} 

\eps}

\newcommand{\tmapcube}[1]{{\psset{unit=0.1cm,nodesep=0pt} \pspicture(-6,0)(6,4)
\pcline{->}(-4,5)(4,5)\naput[npos=0.45]{\ensuremath{\scriptstyle{#1}}} \endpspicture}}

\renewcommand{\a}{\makebox[0.5em]{$a$}}
\renewcommand{\b}{\makebox[0.5em]{$b$}}
\renewcommand{\c}{\makebox[0.5em]{$c$}}
\renewcommand{\d}{\makebox[0.5em]{$d$}}
\renewcommand{\e}{\makebox[0.5em]{$e$}}
\newcommand{\f}{\makebox[0.5em]{$f$}}

\begin{document}


\title{A higher-order Eckmann--Hilton argument}

\author{Eugenia Cheng \\  School of the Art Institute of Chicago \\E-mail: info@eugeniacheng.com \\[12pt]
Alexander S. Corner\\
Sheffield Hallam University\\
E-mail: alex.corner@shu.ac.uk
}


\maketitle


\begin{abstract}
We give a higher-order higher-dimensional Eckmann--Hilton argument that is entirely algebraic. First we give an explicit argument showing that if we have two monoidal structures on a category with suitable interchange, we can derive a braiding on either of the monoidal structures. Then we show that given third monoidal structure, with suitable pairwise interchange on any pair of monoidal structures, each canonical braiding is forced to be a symmetry. As a motivating example, we show that for $n \geq 3$ any $n$-degenerate semi-strict $(n~+~1)$-category has three suitably coherent monoidal structures on its single hom-category, thus the hom-category has the structure of a symmetric monoidal category.
\end{abstract}


\setcounter{tocdepth}{2}
\tableofcontents


\section*{Introduction}
\addcontentsline{toc}{section}{Introduction}

In this paper we provide an explicit higher-dimensional Eckmann--Hilton argument for three monoidal structures on a category. Our original motivation for this is in the study of $n$-degenerate $(n+1)$-categories for $n \geq 3$, as a continuation of our previous work on doubly-degenerate tricategories \cite{cc1,cc2}. In an $n$-degenerate $(n+1)$-category the only non-trivial dimensions are the $n$- and $(n+1)$-cells; if we perform a dimension shift those become the objects and morphisms of a category which should have $n$ coherent monoidal structures on it coming from the $n$ types of composition on the original $n$-cells. For $n=2$ we have two monoidal structures, and this gives rise to a braided monoidal category, via a weak Eckmann--Hilton argument.

The basic \eh\ argument \cite{eh1} says that a set equipped with two suitably coherent monoid structures must be a commutative monoid; the second monoid structure and the coherence, in the form of an interchange law, force the two monoid structures to be the same and commutative.

There are three related ways in which we are generalising this:

\begin{enumerate}

\item More dimensions: we are moving from monoids to monoidal categories.

\item Weakness: now that we are working in categories, the equalities in the basic \eh\ argument can be specified isomorphisms.

\item Iteration: now that we are working weakly and with another dimension, more ``multiplication'' structures can fit, giving rise to higher-order versions of the Eckmann--Hilton argument.

\end{enumerate}

\noi For three or more suitbly coherent monoidal structures on a category, we know that two of them will produce a braiding; the expectation is that a third monoidal structure should force the braiding to be a symmetry, via a higher-order Eckmann--Hilton argument, and in this paper we provide that argument. We call it the ``Eckmann--Hilton Sphere'', because of its geometry. This constitutes the first main part of the paper.

The other part of the paper is our motivating example or application, where we start with any 3-degenerate (semi-strict) 4-category, and show that under the dimension shift it becomes a category with three suitably coherent monoidal structures, and thus is itself a symmetrical monoidal category.

There are various subtleties involved in this work.  The first is exactly what type of 3-monoidal category we're studying, in terms of the weakness.  For our eventual application of the results to degenerate higher categories, we have a level of weakness in mind following on from our previous work. This is crucially not the maximally weak version, as that is unnecessary for what we are trying to show, and thus it is not worth dealing with the extra difficulties it would entail.  Our approach is also not the maximally strict version, as too much structure collapses in that case, and we would expect to get only strictly commutative monoidal structures. Instead we follow our previous work and use a level of semi-strictness with weak monoidal structures but strict interchange.

Another, related, subtlety is what level abstraction we are using to express these structures: in the literature \cite{bfsv1, am1, fh1, glf1}, $2$-fold monoidal categories are sometimes defined directly, by a list of data and axioms, and sometimes more abstractly by iterating an internalisation construction. While the two approaches are equivalent, we prefer to keep the concepts separate as they play different roles in this work. We are considering a series of related types of structure as follows:

\begin{enumerate}

\item 3-degenerate 4-categories

\item 3 times internalised pseudomonoids in \cat{Cat}

\item 3-fold monoidal categories

\item symmetric monoidal categories.

\end{enumerate}

\noi The iterated internalisation approach helps to make the connection with degenerate higher categories, and also motivates the list of data and axioms for 3-fold monoidal categories; the specific data and axioms are what we then need to give the explicit Eckmann--Hilton argument.

The relationship between (1) and (3) has been studied in the classic work \cite{js1}, with more explicit arguments about braidings appearing in \cite{js3}. However, that work uses a different level of strictness and a different level of abstraction. Our level of semi-strictness makes the direct proofs much more tractable than they would otherwise be, and we find their geometry to be illuminating, so we think it is worth writing them out although the end results are in some sense already known; furthermore, we are continuing our previous work studying semi-strict $n$-categories and showing ways in which they are weak enough to give the expected structures of braided and symmetric monoidal categories.

The paper is structured as follows. The first section is all background. First we give a brief exposition of the basic Eckmann--Hilton argument for two monoid structures on a set, presented in the particular way that is most pertinent for our generalisations, with the multiplications drawn geometrically (horizontal and vertical) and the steps of the argument placed around a circle.

Then we give some background on categories weakly enriched in a 2-category, with the level of strictness we will need for both iterated enrichment and iterated internalisation. The last part of the background section presents $n$-monoidal categories via an iterated internal pseudomonoid construction, which is itself presented via weak enrichment: an internal pseudomonoid in a suitable 2-category $K$ is a degenerate category weakly enriched in $K$. In each case we construct 2-category totalities with strict functors as 1-cells; this will force strict interchange when we iterate the construction. Furthermore, we use icon-like 2-cells to ensure first of all that we have only a 2-category totality and not a 3-category, and secondly to ensure that in the degenerate case we have the appropriate notion of transformation between monoids.

Weak monoidal categories are internal pseudomonoids in the 2-category $\Cat$; taking the 2-category totality as above we obtain the 2-category of weak monoidal categories, strict monoidal functors, and monoidal transformations, and we write this 2-category as $\Mon(\Cat)$ or $\cat{MonCat}$.  The next step is internal monoids in $\Mon(\Cat)$. This definition is not new: a differently semi-strict version was introduced in \cite{bfsv1} and a weaker version in \cite{am1}.  We have chosen to write a definition out fully, partly to make explicit the non-standard level of strictness we are using, and partly to be very explicit about the axioms so that we can refer to them precisely in the proof of our main theorem.  We also find it enlightening to see exactly where the axioms come from by unravelling the iterated internalisation construction. We show that the result is a category equipped with two monoidal structures and a certain amount of compatibility which we call ``strict interchange'' for short, although there are also several other quite technical but crucial axioms. We call these structures 2-monoidal categories. We then iterate one more time to produce 3-fold internal pseudomonoids in $\Cat$; we prove that this iteration gives us a category with 3 monoidal structures, any two of which form a 2-monoidal category structure, and we call these 3-monoidal categories.

In Section~\ref{braided} we revisit the derivation of a braided monoidal category from a 2-monoidal category. This is not strictly new but we have found it enlightening to write out the construction of the braiding and the proof of the axioms explicitly, using our geometric notation (horizontal and vertical) for the two monoidal structures.

In Section~\ref{higherorder} we give the higher-order Eckmann--Hilton argument. We start with a 3-monoidal category; we can take 2 of the monoidal structures and we know we have a braiding; we show that the third monoidal structure forces the braiding to the a symmetry. This is our main theorem. We write the third monoidal structure in a third dimension, which we call ``depthal'', and the higher order argument then takes the form of a geometrically satisfying sphere, divided into 8 commuting octants arising from the 3 different directions of monoidal structure, each of which has 2 possible orderings on the elements.

Note that in \cite{bmosv1} there is reference to an ``Eckmann--Hilton Sphere''; however, that work does not appear to study the commutativity of the octants as it is focused on the weak commutativity of the monoidal structures but not the coherence of the commutativity (the braid and symmetry axioms). Various authors \cite{bat2, sy1, stew1} study higher-dimensional versions of \eha\ using operadic machinery rather than directly algebraic calculations.

In Section~\ref{degenncat} we give our motivating application, showing that any $n$-degenerate $(n+1)$-category becomes an $n$-fold monoidal category upon dimension shift, and thus, by the Eckmann--Hilton Sphere, a symmetric monoidal category. We conclude in Section~\ref{futurework} with a brief overview of future directions for this work.

\subsubsection*{How to read this paper quickly}

For the higher-order Eckmann--Hilton argument:

\begin{itemize}

\item The beginning of Section~\ref{higherorder} gives our notation for 3-fold monoidal structures.

\item Theorem~\ref{ehsphere} is the statement of the higher-order Eckmann--Hilton argument. The proof is on page~\pageref{spherediag} and shows how the \eh\ Sphere is filled in, divided into octants.

\item Lemma~\ref{ehoctant} shows the argument for each octant; Proposition~\ref{twofoldaxioms} lists all the axioms for a 2-fold monoidal category, showing the ``key axioms'' needed for each octant.

\end{itemize}

\noi For the application to degenerate higher categories:

\begin{itemize}

\item Section~\ref{weakenrichment} for the definition of weak enrichment (setting our conventions for semi-strictness).

\item Section~\ref{semistrict} for our definition of semi-strict $n$-categories by iterating weak enrichment.

\item Theorem~\ref{mainthmcat} for the theorem about $n$-degenerate $(n+1)$-categories.

\end{itemize}

\subsubsection*{Terminology and notation conventions}

\begin{itemize}

\item In general we use the terminology ``strict'' for axioms holding on the nose, ``weak'' or ``pseudo'' for up to isomorphism, and ``semi-strict'' for some combination.

\item Our specific definition of semi-strict $n$-category is given in Definition~\ref{semistrict}; the essential idea is to have weak composition but strict interchange, though there are some further subtleties.

\item We sometimes refer to semi-strict $n$-categories simply as $n$-categories, as we do not consider any other type in this work; however, note that in this work 2-categories are strict.

\item We say ``category weakly enriched in a 2-category $K$'' where elsewhere in the literature these are sometimes called bicategories enriched in $K$.

\item We use sans-serif for 2-category totalities; we do not need notation for 1-category totalities in this work.

\item As the iterative constructions risk making the notation unreadable, we use some shorthand that could be ambiguous out of context, including

\begin{itemize}

\item $\cat{Cat}(K)$ for the 2-category of categories weakly enriched in a 2-category $K$, and $\Cat^n(K)$ for the iteration.

\item $\Mon(K)$ for the 2-category of pseudomonoids in $K$, and $\Mon^n(K)$ for the iteration.

\end{itemize}

\end{itemize}

%


\section{Background}\label{background}

\subsection{Background on \eha\ on sets}
\label{backgroundeha}


The basic idea of \eha\ is that if we have a set with two monoid structures distributing over each other in an appropriate sense then they must be the same and commutative.  The basic framework can be stated directly or via internalisation:

\begin{itemize}

\item Direct: We start with a set equipped with two monoid structures sharing the same unit and satisfying interchange.

\item Internalisation: We start with a monoid internal to the category of monoids.

\end{itemize}

\noi The two frameworks are equivalent, and this is a model for what happens at the next dimension up.

\begin{theorem}[Eckmann--Hilton argument]
Let $A$ be a set equipped with two monoid structures $\circ$ and $\ast$ sharing the same unit, such that the following holds for all $a,b,c,d \in A$:
\[(a \ast b) \circ (c \ast d) = (a \circ c) \ast (b \circ d).\]
Equivalently, let $A$ be an internal monoid in the cartesian monoidal category of monoids.

Then for all $a, b, \in A$:
\begin{enumerate}
\item $a \circ b = a \ast b$, and
\item $a \ast b = b \ast a$.
\end{enumerate}
\end{theorem}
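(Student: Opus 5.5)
The plan is the standard unit-insertion argument, carried out in two stages. First we establish the equivalence of the two formulations. Then we derive the two conclusions by substituting the shared unit $1$ into the interchange law in suitable positions.

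For the equivalence, recall that an internal monoid in $(\Mon,\times,\1)$ is a monoid $(A,\ast,1_\ast)$ together with two monoid homomorphisms: a multiplication $\circ\colon A\times A\to A$ and a unit $\1\to A$, satisfying associativity and unit laws.
\begin{itemize}
\item Saying that $\circ$ is a homomorphism for the product monoid structure on $A\times A$ is precisely the interchange law $(a\ast b)\circ(c\ast d)=(a\circ c)\ast(b\circ d)$, together with $1_\ast\circ 1_\ast=1_\ast$.
\item Saying that the unit map is a homomorphism says that it picks out $1_\ast$, so the unit of $\circ$ is $1_\ast$.
\end{itemize}
Conversely, given the direct data, these same observations show that the maps are homomorphisms. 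This reduces everything to the direct version.

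For the direct version, write $1$ for the common unit. First we put $b=c=1$ in interchange:
\[ a\circ d=(a\ast 1)\circ(1\ast d)=(a\circ 1)\ast(1\circ d)=a\ast d,\]
which gives (1). Next we put $a=d=1$:
\[ b\circ c=(1\ast b)\circ(c\ast 1)=(1\circ c)\ast(b\circ 1)=c\ast b.\]
Combined with (1), this gives $b\ast c=c\ast b$, which is (2). Note that neither argument uses associativity.

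The only step needing any care is choosing which entries of the interchange law to set to $1$. The second choice, with units in the diagonal positions, is the one that produces the swap. In the paper's geometric picture, this corresponds to sliding $a$ and $b$ around each other in a grid, and it is the step whose circle of equalities the later higher-dimensional arguments will weaken to isomorphisms. The remaining substitution and the equivalence check are routine.
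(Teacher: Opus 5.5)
Your proof is correct and is essentially the paper's. The paper runs the single chain $a\ast b=(a\circ 1)\ast(1\circ b)=(a\ast 1)\circ(1\ast b)=a\circ b=(1\ast a)\circ(b\ast 1)=(1\circ b)\ast(a\circ 1)=b\ast a$, which is exactly your two unit substitutions (units off the diagonal, then on the diagonal) joined together, and your one addition is the explicit check that the internal-monoid formulation is equivalent to the direct one, which the paper only asserts.
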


Note that weaker hypotheses are possible, for example associativity does not need to be assumed as it follows. In \cite{koc3} it is shown that associativity alone (without assuming units) is enough. However, for this work we are not concerned with stating the strongest possible result.

\begin{proof}
\[\begin{array}{rcl}
a \ast b &=& (a \circ 1) \ast (1 \circ b) \\
 &=& (a \ast 1) \circ (1 \ast b) \\
  &=& a \circ b \\
   &=& (1 \ast a) \circ (b \ast 1) \\
    &=& (1 \circ b) \ast (a \circ 1) \\
     &=& b \ast a
     \end{array}\]
     \end{proof}

We find this proof to be more illuminating when written geometrically, and this is especially helpful for the generalisation into higher dimensions. We write the two binary operations vertically and horizontally, as follows:

\[\vtp{a}{b} \hs{1} \mbox{and} \hs{1} \htp{a}{b}\]

\noi The interchange condition may then be written thus:

\[\vtp{\htp{\a}{\b}\vs{0.2}}{\htp{\c}{\d}} \hs{0.7} = \hs{0.7} \htp{\vtp{\a}{\c}\hs{0.1}}{\hs{0.1}\vtp{\b}{\d}}\]

\noi and the Eckmann--Hilton argument becomes this:

\[\htp{\a}{\b} \hs{0.7} = \hs{0.7}
\htp{\vtp{\a}{1}\hs{0.1}}{\hs{0.1}\vtp{1}{\b}}  \hs{0.7} = \hs{0.7}
\vtp{\htp{\a}{1}\vs{0.2}\hs{0.0}}{\htp{1}{\b}} \hs{0.7} = \hs{0.7}
\vtp{\a}{\b}  \hs{0.7} = \hs{0.7}
\vtp{\htp{1}{\a}\vs{0.2}}{\htp{\b}{1}} \hs{0.7} =  \hs{0.7}
\htp{\vtp{1}{\b}\hs{0.1}}{\hs{0.1}\vtp{\a}{1}}  \hs{0.7} = \hs{0.7}
\htp{\b}{\a} \]

\noi In fact there are two ``orientations'' in which one can make this argument, represented by two ways around the following ``Eckmann--Hilton clock'' from left to right: clockwise or anti-clockwise (and to streamline the notation we write the unit as a blank instead of 1).

\[\pspicture(-60,-54)(60,54)
\polar

\rput(50;180){\rnode{a9}{\htp{\a}{\b}}}

\rput(50;150){\rnode{a10}{\htp{\vtp{\a}{}\hs{0.1}}{\hs{0.1}\vtp{}{\b}}}}

\rput(50;120){\rnode{a11}{\vtp{\htp{\a}{\hs{0.5}}\vs{0.2}}{\htp{\hs{0.5}}{\b}}}}

\rput(50;90){\rnode{a12}{\vtp{\a}{\b}}}

\rput(50;60){\rnode{a1}{\vtp{\htp{\hs{0.5}}{\a}\vs{0.2}}{\htp{\b}{\hs{0.5}}}}}

\rput(50;30){\rnode{a2}{\htp{\vtp{}{\b}\hs{0.1}}{\hs{0.1}\vtp{\a}{}}}}

\rput(50;0){\rnode{a3}{\htp{\b}{\a}}}

\rput(50;-30){\rnode{a4}{\htp{\vtp{\b}{}\hs{0.1}}{\hs{0.1}\vtp{}{\a}}}}

\rput(50;-60){\rnode{a5}{\vtp{\htp{\b}{\hs{0.5}}\vs{0.2}}{\htp{\hs{0.5}}{\a}}}}

\rput(50;-90){\rnode{a6}{\vtp{\b}{\a}}}

\rput(50;-120){\rnode{a7}{\vtp{\htp{\hs{0.5}}{\b}\vs{0.2}}{\htp{\a}{\hs{0.5}}}}}

\rput(50;-150){\rnode{a8}{\htp{\vtp{}{\a}\hs{0.1}}{\hs{0.1}\vtp{\b}{}}}}

\psset{arrows=->, nodesep=6pt,arcangle=15,ncurv=0.5}

\ncarc[nodesepA=8pt,nodesepB=4pt]{a9}{a10}
\ncarc[nodesepA=4pt,nodesepB=6pt]{a10}{a11}
\ncarc[nodesepA=6pt,nodesepB=10pt]{a11}{a12}
\ncarc[nodesepA=10pt,nodesepB=6pt]{a12}{a1}
\ncarc[nodesepA=6pt,nodesepB=4pt]{a1}{a2}
\ncarc[nodesepA=4pt,nodesepB=10pt]{a2}{a3}


\psset{arcangle=-15, ncurv=0.5}
\ncarc[nodesepA=10pt,nodesepB=2pt]{a9}{a8}
\ncarc[nodesepA=4pt,nodesepB=6pt]{a8}{a7}
\ncarc[nodesepA=6pt,nodesepB=10pt]{a7}{a6}
\ncarc[nodesepA=10pt,nodesepB=6pt]{a6}{a5}
\ncarc[nodesepA=6pt,nodesepB=4pt]{a5}{a4}
\ncarc[nodesepA=4pt,nodesepB=10pt]{a4}{a3}

\eps\]

At the level of sets and monoid structures every arrow is an equality, so the choice of orientation makes no difference, but in the weak higher-dimensional generalisation this becomes crucial.

Before we move on to the generalisation into higher dimensions, we need to generalise the underlying framework. While the basic Eckmann--Hilton argument involves a set with two monoid structures, we will now need a category with two monoidal category structures.


\subsection{Background on weak enrichment}
\label{weakenrichment}

In this section we will gather the definitions we need for categories weakly enriched in a cartesian 2-category $K$. We will use this in two ways: 

\begin{itemize}
\item The degenerate version will give us the definition of pseudomonoid in $K$, and we will iterate this construction to define $n$-monoidal categories.

\item We will iterate the weak enrichment construction to define semi-strict $n$-categories.

\end{itemize}

\noi Throughout this section, let $K$ be a cartesian 2-category, that is, a 2-category with products (with the strictest possible universal property). We are going to construct the 2-category of weak $K$-categories, strict $K$-functors, and $K$-icons. This slightly unusual combination of weakness and strictness is exactly what we need for our notion of semi-strict $n$-categories. Note that this is slightly weaker than the definitions we used in \cite{cc2} as in that work we used weak enrichment in the first step but strict enrichment in the second. This is a stricter special case of the definition given in \cite{cg4} of enrichment in a braided monoidal bicategory; our case is much stricter and thus much simpler, as weakening only the enrichment and not the functors does not complicate the technicalities of the 2-categorical structure compared with the fully strict case. Categories weakly enriched in a 2-category are sometimes called enriched bicategories.

\begin{definition}\label{wencat} Let $K$ be a cartesian 2-category.  A \emph{category weakly enriched in $K$} or \emph{weak $K$-category} has


\begin{itemize}
 \item an underlying $K$-graph $X$, that is

 \begin{itemize}
\item a set $X_0$ of objects, and
\item for all $a,b \in X_0$ a hom-object $X(a,b)$ which is a 0-cell of $K$,
\end{itemize}

\item composition: for all $a,b,c \in X_0$ a 1-cell in $K$
\[m_{abc} : X(b,c) \times X(a,b) \lra X(a,c),\]
\item identities: for all $a \in X_0$ a 1-cell in $K$
\[I_a: 1 \tra X(a,a),\]

\item unit constraints: for all $a, b \in X_0$ invertible 2-cells in $K$

\[\psset{unit=0.08cm,labelsep=0pt,nodesep=3pt}
\pspicture(0,-12)(100,12)

\rput(18,0){\pspicture(-10,-10)(10,10)


\rput(-32,10){\rnode{a1}{$X(a,b) \times 1$}} 
\rput(10,10){\rnode{a2}{$X(a,b) \times X(a,a)$}} 

\rput(10,-10){\rnode{b2}{$X(a,b)$}}  

\psset{nodesep=3pt,labelsep=2pt,arrows=->}
\ncline{a1}{a2}\naput[npos=0.45]{{\scriptsize $1 \hs{-0.2} \times \hs{-0.2} I_a$}} 
\ncline{a1}{b2}\nbput{{\scriptsize $\sim$}} 
\ncline{a2}{b2}\naput{{\scriptsize $m_{a,a,b}$}} 

\rput(1,1){
\psset{labelsep=1.5pt}
\pnode(2,2){a3}
\pnode(-2,-2){b3}
\ncline[doubleline=true,arrowinset=0.6,arrowlength=0.8,arrowsize=0.5pt 2.1,nodesep=0pt]{a3}{b3} \nbput[npos=0.45]{{\scriptsize $r_{a,b}$}}
}

\endpspicture}


\rput(58,0){\pspicture(-10,-10)(10,10)


\rput(52,10){\rnode{a1}{$1 \times X(a,b)$}} 
\rput(10,10){\rnode{a2}{$X(b,b) \times X(a,b)$}} 

\rput(10,-10){\rnode{b2}{$X(a,b)$}}  

\psset{nodesep=3pt,labelsep=2pt,arrows=->}
\ncline{a1}{a2}\nbput[npos=0.4]{{\scriptsize $I_b \hs{-0.2} \times \hs{-0.2} 1$}} 
\ncline{a1}{b2}\naput{{\scriptsize $\sim$}} 
\ncline{a2}{b2}\nbput{{\scriptsize $m_{a,b,b}$}} 

\rput(20,1){
\psset{labelsep=1.5pt}
\pnode(-2,2){a3}
\pnode(2,-2){b3}
\ncline[doubleline=true,arrowinset=0.6,arrowlength=0.8,arrowsize=0.5pt 2.1,nodesep=0pt]{a3}{b3} \naput[npos=0.4]{{\scriptsize $l_{a,b}$}}
}

\endpspicture}

\eps\]

\item associativity constraints: for all $a,b,c,d \in X_0$, an invertible 2-cell in $K$

\[\psset{unit=0.08cm,labelsep=0pt,nodesep=3pt}
\pspicture(-10,-12)(10,12)


\rput(-35,10){\rnode{a1}{$X(c,d) \times X(b,c) \times X(a,b)$}} 
\rput(35,10){\rnode{a2}{$X(b,d) \times X(a,b)$}} 

\rput(-35,-10){\rnode{b1}{$X(c,d) \times X(a,c)$}}   
\rput(35,-10){\rnode{b2}{$X(a,d)$}}  

\psset{nodesep=3pt,labelsep=2pt,arrows=->}
\ncline{a1}{a2}\naput[npos=0.45]{{\scriptsize $m_{b,c,d} \hs{-0.2} \times \hs{-0.2} 1$}} 
\ncline{b1}{b2}\nbput{{\scr $m_{a,c,d}$}} 
\ncline{a1}{b1}\nbput{{\scriptsize $1 \hs{-0.2} \times \hs{-0.2} m_{a,b,c}$}} 
\ncline{a2}{b2}\naput{{\scriptsize $m_{a,b,d}$}} 

\rput(1,1){
\psset{labelsep=1.5pt}
\pnode(2,2){a3}
\pnode(-2,-2){b3}
\ncline[doubleline=true,arrowinset=0.6,arrowlength=0.8,arrowsize=0.5pt 2.1,nodesep=0pt]{a3}{b3} \naput[npos=0.45]{{\scriptsize $\alpha_{a,b,c,d}$}}
}

\endpspicture\]

\end{itemize}

\noi This data satisfies the following axioms. To streamline the notation we write $(a,b)$ for $X(a,b)$, omit all subscripts and $\times$ symbols, and suppress coherence for $\times$.

\begin{itemize}
\item Associativity axiom:
\end{itemize}

\[\psset{unit=0.14cm,labelsep=2pt,nodesep=3pt}
\pspicture(-25,40)(25,115)

\rput(0,100){
\pspicture(50,50)


\rput(20,12){\rnode{a2}{$(d,e)(a,d)$}}
\rput(46,12){\rnode{a4}{$(a,e)$}}
\rput(3,23){\rnode{b1}{$(d,e)(c,d)(a,c)$}}
\rput(20,30){\rnode{c2}{$(d,e)(b,d)(a,b)$}}
\rput(46,30){\rnode{c4}{$(b,e)(a,b)$}}
\rput(3,40){\rnode{d1}{$(d,e)(c,d)(b,c)(a,b)$\hs{0.8}}}
\rput(28,40){\rnode{d3}{\hs{0.8}$(c,e)(b,c)(a,b)$}}

\ncline[nodesepA=-4pt,nodesepB=-4pt]{->}{d1}{d3} \naput{{\scriptsize $m11$}} 
\ncline{->}{d3}{c4} \naput{{\scriptsize $m1$}}
\ncline{->}{d1}{c2} \naput{{\scriptsize $1m1$}}
\ncline{->}{c2}{c4} \naput{{\scriptsize $m1$}}

\ncline{->}{d1}{b1} \nbput{{\scriptsize $11m$}}
\ncline{->}{b1}{a2} \nbput{{\scriptsize $1m$}}
\ncline{->}{c2}{a2} \naput{{\scriptsize $1m$}}

\ncline{->}{c4}{a4} \naput{{\scriptsize $m$}} 
\ncline{->}{a2}{a4} \nbput[labelsep=3pt]{{\scriptsize $m$}}


{
\rput[c](25,34){\psset{unit=1mm,doubleline=true,arrowinset=0.6,arrowlength=0.5,arrowsize=0.5pt 2.1,nodesep=0pt,labelsep=1pt}
\pcline{->}(3,3)(0,0) \naput{{\scriptsize $\alpha1$}}}}

{
\rput[c](11,25){\psset{unit=1mm,doubleline=true,arrowinset=0.6,arrowlength=0.5,arrowsize=0.5pt 2.1,nodesep=0pt,labelsep=2pt}
\pcline{->}(3,3)(0,0) \naput{{\scriptsize $1\alpha$}}}}

{
\rput[c](33,20){\psset{unit=1mm,doubleline=true,arrowinset=0.6,arrowlength=0.5,arrowsize=0.5pt 2.1,nodesep=0pt,labelsep=2pt}
\pcline{->}(3,3)(0,0) \naput{{\scriptsize  $\alpha$}}}}

\endpspicture}

\rput(-25,83){$=$}


\rput(0,60){
\pspicture(50,50)


\rput(20,12){\rnode{a2}{$(d,e)(a,d)$}}
\rput(46,12){\rnode{a4}{$(a,e)$}}
\rput(3,23){\rnode{b1}{$(d,e)(c,d)(a,c)$}}

\rput(28,23){\rnode{b3}{$(c,e)(a,c)$}}

\rput(46,30){\rnode{c4}{$(b,e)(a,b)$}}

\rput(3,40){\rnode{d1}{$(d,e)(c,d)(b,c)(a,b)$\hs{0.8}}}
\rput(28,40){\rnode{d3}{\hs{0.8}$(c,e)(b,c)(a,b)$}}

\ncline[nodesep=-4pt]{->}{d1}{d3} \naput{{\scriptsize $m11$}} 
\ncline{->}{d3}{b3} \naput{{\scriptsize $1m$}}
\ncline{->}{d1}{b1} \nbput{{\scriptsize $11m$}}
\ncline{->}{b1}{b3} \naput{{\scriptsize $m1$}}

\ncline{->}{d3}{c4} \naput{{\scriptsize $m1$}} 
\ncline{->}{c4}{a4} \naput{{\scriptsize $m$}}
\ncline{->}{b3}{a4} \naput{{\scriptsize $m$}}

\ncline{->}{b1}{a2} \nbput{{\scriptsize $1m$}} 
\ncline{->}{a2}{a4} \nbput[labelsep=3pt]{{\scriptsize $m$}}


{
\rput[c](36,26){\psset{unit=1mm,doubleline=true,arrowinset=0.6,arrowlength=0.5,arrowsize=0.5pt 2.1,nodesep=0pt,labelsep=1pt}
\pcline{->}(3,3)(0,0) \naput{{\scriptsize $\alpha$}}}}

{
\rput[c](24,16){\psset{unit=1mm,doubleline=true,arrowinset=0.6,arrowlength=0.5,arrowsize=0.5pt 2.1,nodesep=0pt,labelsep=2pt}
\pcline{->}(3,3)(0,0) \naput{{\scriptsize $\alpha$}}}}

\rput[c](16,32){\scr $=$}

\eps}

\endpspicture\]


\begin{itemize}
\item Unit axiom:

\end{itemize}

\[\psset{unit=0.14cm,labelsep=2pt,nodesep=3pt}
\pspicture(-20,15)(20,90)

\rput(0,70){
\pspicture(0,10)(50,45)


\rput(7,40){\rnode{a}{$(b,c)1(a,b)$}}
\rput(30,40){\rnode{b}{$(b,c)(b,b)(a,b)$}}
\rput(47,28){\rnode{c}{$(b,c)(a,b)$}}
\rput(30,23){\rnode{d}{$(b,c)(a,b)$}}
\rput(47,11){\rnode{e}{$(a,c)$}}

\ncline{->}{a}{b} \naput{{\scriptsize $1I1$}} 
\ncline{->}{b}{c} \naput{{\scriptsize $m1$}}
\ncline{->}{c}{e} \naput{{\scriptsize $m$}}
\ncline{->}{a}{d} \nbput{{\scriptsize $\sim$}}
\ncline{->}{d}{e} \nbput{{\scriptsize $m$}}

\ncline{->}{b}{d} \naput{{\scriptsize $1m$}}

{
\rput[c](21,34){\psset{unit=1mm,doubleline=true,arrowinset=0.6,arrowlength=0.5,arrowsize=0.5pt 2.1,nodesep=0pt,labelsep=2pt}
\pcline{->}(3,3)(0,0) \naput{{\scriptsize $1l$}}}}

{
\rput[c](37,25){\psset{unit=1mm,doubleline=true,arrowinset=0.6,arrowlength=0.5,arrowsize=0.5pt 2.1,nodesep=0pt,labelsep=2pt}
\pcline{->}(3,3)(0,0) \naput{{\scriptsize $\alpha$}}}}

\endpspicture}

\rput(-25,60){$=$}

\rput(0,30){\pspicture(50,45)


\rput(7,40){\rnode{a}{$(b,c)1(a,b)$}}
\rput(30,40){\rnode{b}{$(b,c)(b,b)(a,b)$}}
\rput(47,28){\rnode{c}{$(b,c)(a,b)$}}
\rput(47,11){\rnode{e}{$(a,c)$}}

\ncline{->}{a}{b} \naput{{\scriptsize $1I1$}} 
\ncline{->}{b}{c} \naput{{\scriptsize $m1$}}
\ncline{->}{c}{e} \naput{{\scriptsize $m$}}
\nccurve[angleA=-30,angleB=180]{->}{a}{c} \nbput{{\scriptsize $\sim$}}

{
\rput[c](26,34){\psset{unit=1mm,doubleline=true,arrowinset=0.6,arrowlength=0.5,arrowsize=0.5pt 2.1,nodesep=0pt,labelsep=2pt}
\pcline{->}(3,3)(0,0) \naput{{\scriptsize  $r1$}}}}

\eps}

\endpspicture
\]

\end{definition}

Note that one face of the associativity cube is an equality so this becomes the usual associativity pentagon in a bicategory, and the unit axiom becomes the unit triangle.

The definitions of $K$-functors and $K$-icons are not affected by the weakness of the enrichment; however, the following is a strict version of the fully weak definition given in \cite{cg4}. Note that although the 2-cell constraints are all identities, there are still some axioms at the 2-cell level.

\begin{definition}
Given weak $K$-categories $X$ and $X'$, a (strict) \emph{$K$-functor} $F: X \tra X'$ is given by 

\begin{itemize}
 \item a function $F_0\:  X_0 \tra X'_0$,

\item for all $a, b, \in X_0$ a 1-cell $F_{ab} \: X(a,b) \tra X'(Fa, Fb) \in K$
\end{itemize}
satisfying the following axioms 

\begin{itemize}
\item strict 2-cell constraints: the following diagrams commute
\end{itemize}

\[\psset{unit=0.1cm,labelsep=2pt,nodesep=3pt}
\pspicture(0,-7)(80,27)

\rput(20,5){
\pspicture(0,-5)(40,22)


\rput(0,20){\rnode{a1}{$(b,c)(a,b)$}} 
\rput(35,20){\rnode{a2}{$(Fb, Fc)(Fa,Fb)$}} 

\rput(0,0){\rnode{b1}{$(a,c)$}}   
\rput(35,0){\rnode{b2}{$(Fa,Fc)$}}  

\psset{nodesep=3pt,labelsep=3pt,arrows=->}
\ncline{a1}{a2}\naput{{\scriptsize $F_{bc}F_{ab}$}} 
\ncline{b1}{b2}\nbput{{\scriptsize $F_{ac}$}} 
\ncline{a1}{b1}\nbput{{\scriptsize $m$}} 
\ncline{a2}{b2}\naput{{\scriptsize $m'$}} 



\endpspicture}


\rput(70,5){
\psset{unit=0.1cm,labelsep=2pt,nodesep=3pt,npos=0.4}
\pspicture(0,-5)(20,22)


\rput(0,20){\rnode{a1}{$1$}}  
\rput(26,20){\rnode{a2}{$(a,a)$}}  
\rput(26,0){\rnode{a3}{$(Fa,Fa).$}}  

\ncline{->}{a1}{a2} \naput[npos=0.6]{{\scriptsize $I_a$}} 
\ncline{->}{a1}{a3} \nbput[npos=0.55]{{\scriptsize $I'_a$}} 
\ncline{->}{a2}{a3} \naput{{\scriptsize $F_{aa}$}} 


\eps}

\endpspicture
\]

\begin{itemize}
 \item associativity (we now omit the subscripts on $F$ and $I$)

\[\psset{unit=0.14cm,labelsep=2pt,nodesep=3pt}
\pspicture(50,70)

\rput(25,55){
\pspicture(0,10)(50,45)


\rput(20,12){\rnode{a2}{$(a,d)$}}
\rput(46,12){\rnode{a4}{$(Fa,Fd)$}}
\rput(0,23){\rnode{b1}{$(c,d)(a,c)$}}
\rput(20,30){\rnode{c2}{$(b,d)(a,b)$}}
\rput(46,30){\rnode{c4}{$(Fb,Fd)(Fa,Fb)$}}
\rput(0,40){\rnode{d1}{$(c,d)(b,c)(a,b)\hs{0.5}$}}
\rput(28,40){\rnode{d3}{\hs{1.5}$(Fc,Fd)(Fb,Fc)(Fa,Fb)$}}

\ncline[nodesepA=-2pt,nodesepB=-12pt]{->}{d1}{d3} \naput{{\scriptsize $FFF$}} 
\ncline{->}{d3}{c4} \naput{{\scriptsize $m'1$}}
\ncline{->}{d1}{c2} \naput{{\scriptsize $m1$}}
\ncline{->}{c2}{c4} \naput{{\scriptsize $FF$}}

\ncline{->}{d1}{b1} \nbput{{\scriptsize $1m$}}
\ncline{->}{b1}{a2} \nbput{{\scriptsize $m$}}
\ncline{->}{c2}{a2} \naput{{\scriptsize $m$}}

\ncline{->}{c4}{a4} \naput{{\scriptsize $m'$}} 
\ncline{->}{a2}{a4} \nbput[labelsep=3pt]{{\scriptsize $F$}}


{
\rput[c](25,34){\psset{unit=1mm,doubleline=true,arrowinset=0.6,arrowlength=0.5,arrowsize=0.5pt 2.1,nodesep=0pt,labelsep=1pt}
\pcline{-}(2,2)(0,0)}} 

{
\rput[c](11,25){\psset{unit=1mm,doubleline=true,arrowinset=0.6,arrowlength=0.5,arrowsize=0.5pt 2.1,nodesep=0pt,labelsep=2pt}
\pcline{->}(3,3)(0,0) \naput{{\scriptsize $\alpha$}}}}

{
\rput[c](33,20){\psset{unit=1mm,doubleline=true,arrowinset=0.6,arrowlength=0.5,arrowsize=0.5pt 2.1,nodesep=0pt,labelsep=2pt}
\pcline{-}(2,2)(0,0) \naput{{\scriptsize $$}}}}

\endpspicture}

\rput(-10,40){$=$}

\rput(25,14){\pspicture(50,44)


\rput(20,12){\rnode{a2}{$(a,d)$}}
\rput(46,12){\rnode{a4}{$(Fa,Fd)$}}
\rput(0,23){\rnode{b1}{$(c,d)(a,c)$}}

\rput(28,23){\rnode{b3}{$(Fc,Fd)(Fa,Fc)$}}

\rput(46,30){\rnode{c4}{$(Fb,Fd)(Fa,Fb)$}}
\rput(0,40){\rnode{d1}{$(c,d)(b,c)(a,b)\hs{0.5}$}}
\rput(28,40){\rnode{d3}{\hs{1.5}$(Fc,Fd)(Fb,Fc)(Fa,Fb)$}}

\ncline[nodesepA=-2pt,nodesepB=-12pt]{->}{d1}{d3} \naput{{\scriptsize $FFF$}} 

\ncline{->}{d3}{b3} \naput{{\scriptsize $1m'$}}
\ncline{->}{d1}{b1} \nbput{{\scriptsize $1m$}}
\ncline{->}{b1}{b3} \naput{{\scriptsize $FF$}}

\ncline{->}{d3}{c4} \naput{{\scriptsize $m'1$}} 
\ncline{->}{c4}{a4} \naput{{\scriptsize $m'$}}
\ncline{->}{b3}{a4} \naput{{\scriptsize $m'$}}

\ncline{->}{b1}{a2} \nbput{{\scriptsize $m$}} 
\ncline{->}{a2}{a4} \nbput[labelsep=3pt]{{\scriptsize $F$}}


{
\rput[c](36,26){\psset{unit=1mm,doubleline=true,arrowinset=0.6,arrowlength=0.5,arrowsize=0.5pt 2.1,nodesep=0pt,labelsep=1pt}
\pcline{->}(3,3)(0,0) \naput{{\scriptsize $\alpha'$}}}}

{
\rput[c](24,16){\psset{unit=1mm,doubleline=true,arrowinset=0.6,arrowlength=0.5,arrowsize=0.5pt 2.1,nodesep=0pt,labelsep=2pt}
\pcline{-}(2,2)(0,0) \naput{{\scriptsize $$}}}}

{
\rput[c](13,31){\psset{unit=1mm,doubleline=true,arrowinset=0.6,arrowlength=0.5,arrowsize=0.5pt 2.1,nodesep=0pt,labelsep=2pt}
\pcline{-}(2,2)(0,0) \naput{{\scriptsize $$}}}}

\eps}

\endpspicture\]

\item right unit

\[\psset{unit=0.12cm,labelsep=2pt,nodesep=3pt}
\pspicture(50,70)

\rput(25,55){\pspicture(0,8)(50,40)

%

\rput(20,12){\rnode{a2}{$(a,b)$}}
\rput(46,12){\rnode{a4}{$(Fa,Fb)$}}

\rput(20,30){\rnode{c2}{$(a,b)(a,a)$}}
\rput(46,30){\rnode{c4}{$(Fa,Fb)(Fa,Fa)$}}
\rput(0,40){\rnode{d1}{$(a,b)1$}}
\rput(28,40){\rnode{d3}{$(Fa,Fb)1$}}

\ncline{->}{d1}{d3} \naput{{\scriptsize $F1$}} 
\ncline{->}{d3}{c4} \naput{{\scriptsize $1I'$}}
\ncline{->}{d1}{c2} \naput{{\scriptsize $1I$}}
\ncline{->}{c2}{c4} \naput{{\scriptsize $FF$}}

\ncline{->}{d1}{a2} \nbput{{\scriptsize $\sim$}}
\ncline{->}{c2}{a2} \naput{{\scriptsize $m$}}

\ncline{->}{c4}{a4} \naput{{\scriptsize $m'$}} 
\ncline{->}{a2}{a4} \nbput[labelsep=3pt]{{\scriptsize $F$}}


{
\rput[c](25,34){\psset{unit=1mm,doubleline=true,arrowinset=0.6,arrowlength=0.5,arrowsize=0.5pt 2.1,nodesep=0pt,labelsep=1pt}
\pcline{-}(2,2)(0,0) \naput{{\scriptsize $$}}}}

{
\rput[c](14,24){\psset{unit=1mm,doubleline=true,arrowinset=0.6,arrowlength=0.5,arrowsize=0.5pt 2.1,nodesep=0pt,labelsep=2pt}
\pcline{->}(3,3)(0,0) \naput{{\scriptsize $ r$}}}}

{
\rput[c](33,20){\psset{unit=1mm,doubleline=true,arrowinset=0.6,arrowlength=0.5,arrowsize=0.5pt 2.1,nodesep=0pt,labelsep=2pt}
\pcline{-}(2,2)(0,0) \naput{{\scriptsize $$}}}}

\endpspicture}

\rput(-5,40){$=$}

\rput(25,15){\pspicture(50,44)

%

\rput(20,12){\rnode{a2}{$(a,b)$}}
\rput(46,12){\rnode{a4}{$(Fa,Fb)$}}

\rput(46,30){\rnode{c4}{\hs{2}$(Fa,Fb)(Fa,Fa)$}}
\rput(0,40){\rnode{d1}{$(a,b)1$}}
\rput(28,40){\rnode{d3}{$(Fa,Fb)1$}}

\ncline{->}{d1}{d3} \naput{{\scriptsize $F1$}} 
\ncline{->}{d1}{a2} \nbput{{\scriptsize $\sim$}}

\ncline{->}{d3}{c4} \naput{{\scriptsize $1I'$}} 
\ncline{->}{c4}{a4} \naput{{\scriptsize $m'$}}
\ncline{->}{d3}{a4} \nbput{{\scriptsize $\sim$}}

\ncline{->}{a2}{a4} \nbput[labelsep=3pt]{{\scriptsize $F$}}


{
\rput[c](41,23){\psset{unit=1mm,doubleline=true,arrowinset=0.6,arrowlength=0.5,arrowsize=0.5pt 2.1,nodesep=0pt,labelsep=1pt}
\pcline{->}(3,3)(0,0) \naput{{\scriptsize $r'$}}}}

\rput[c](23,27){{\scriptsize $=$}}

\eps}

\endpspicture\]

\item left unit

\[\psset{unit=0.12cm,labelsep=2pt,nodesep=3pt}
\pspicture(50,70)

\rput(25,55){\pspicture(0,8)(50,40)

%

\rput(20,12){\rnode{a2}{$(a,b)$}}
\rput(46,12){\rnode{a4}{$(Fa,Fb)$}}

\rput(20,30){\rnode{c2}{$(b,b)(a,b)$}}
\rput(46,30){\rnode{c4}{$(Fb,Fb)(Fa,Fb)$}}
\rput(0,40){\rnode{d1}{$1(a,b)$}}
\rput(28,40){\rnode{d3}{$1(Fa,Fb)$}}

\ncline{->}{d1}{d3} \naput{{\scriptsize $1F$}} 
\ncline{->}{d3}{c4} \naput{{\scriptsize $I'1$}}
\ncline{->}{d1}{c2} \naput{{\scriptsize $I1$}}
\ncline{->}{c2}{c4} \naput{{\scriptsize $FF$}}

\ncline{->}{d1}{a2} \nbput{{\scriptsize $\sim$}}
\ncline{->}{c2}{a2} \naput{{\scriptsize $m$}}

\ncline{->}{c4}{a4} \naput{{\scriptsize $m'$}} 
\ncline{->}{a2}{a4} \nbput[labelsep=3pt]{{\scriptsize $F$}}


{
\rput[c](25,34){\psset{unit=1mm,doubleline=true,arrowinset=0.6,arrowlength=0.5,arrowsize=0.5pt 2.1,nodesep=0pt,labelsep=1pt}
\pcline{-}(2,2)(0,0) \naput{{\scriptsize $$}}}}

{
\rput[c](14,24){\psset{unit=1mm,doubleline=true,arrowinset=0.6,arrowlength=0.5,arrowsize=0.5pt 2.1,nodesep=0pt,labelsep=2pt}
\pcline{->}(3,3)(0,0) \naput{{\scriptsize $ l$}}}}

{
\rput[c](33,20){\psset{unit=1mm,doubleline=true,arrowinset=0.6,arrowlength=0.5,arrowsize=0.5pt 2.1,nodesep=0pt,labelsep=2pt}
\pcline{-}(2,2)(0,0) \naput{{\scriptsize $$}}}}

\endpspicture}

\rput(-5,40){$=$}

\rput(25,15){\pspicture(50,44)

%

\rput(20,12){\rnode{a2}{$(a,b)$}}
\rput(46,12){\rnode{a4}{$(Fa,Fb)$}}

\rput(46,30){\rnode{c4}{\hs{2}$(Fb,Fb)(Fa,Fb)$}}
\rput(0,40){\rnode{d1}{$1(a,b)$}}
\rput(28,40){\rnode{d3}{$1(Fa,Fb)$}}

\ncline{->}{d1}{d3} \naput{{\scriptsize $1F$}} 
\ncline{->}{d1}{a2} \nbput{{\scriptsize $\sim$}}

\ncline{->}{d3}{c4} \naput{{\scriptsize $I'1$}} 
\ncline{->}{c4}{a4} \naput{{\scriptsize $m'$}}
\ncline{->}{d3}{a4} \nbput{{\scriptsize $\sim$}}

\ncline{->}{a2}{a4} \nbput[labelsep=3pt]{{\scriptsize $F$}}


{
\rput[c](41,23){\psset{unit=1mm,doubleline=true,arrowinset=0.6,arrowlength=0.5,arrowsize=0.5pt 2.1,nodesep=0pt,labelsep=1pt}
\pcline{->}(3,3)(0,0) \naput{{\scriptsize $l'$}}}}

\rput[c](23,27){{\scriptsize $=$}}

\eps}

\endpspicture\]

\end{itemize}

\end{definition}

\begin{definition}[$K$-icons]

Let $X,Y$ be weak $K$-categories, and let $F,G \: X \tra Y$ be strict $K$-functors such that $Fa=Ga$ for all objects $a \in X$.  A \emph{$K$-icon}
\[
\psset{unit=0.1cm,labelsep=2pt,nodesep=2pt}
\pspicture(20,20)

\rput(0,10){\rnode{a1}{$X$}}  
\rput(20,10){\rnode{a2}{$Y$}}  

\ncarc[arcangle=45]{->}{a1}{a2}\naput{{\scriptsize $F$}}
\ncarc[arcangle=-45]{->}{a1}{a2}\nbput{{\scriptsize $G$}}

\pcline[linewidth=0.6pt,doubleline=true,arrowinset=0.6,arrowlength=0.8,arrowsize=0.5pt 2.1]{->}(10,13)(10,7)  \naput{{\scriptsize $\sigma$}}


\endpspicture
\]
is given by, for all pairs of objects $a,b \in X$ a 2-cell

\[
\psset{unit=0.1cm,labelsep=2pt,nodesep=2pt}
\pspicture(0,-4)(30,24)

\rput(0,10){\rnode{a1}{$X(a,b)$}}  
\rput(30,13){\rnode{a2}{$Y(Fa,Fb)$}}  
\rput(31,10){\rotatebox{90}{$=$}}
\rput(30,7){\rnode{a3}{$Y(Ga,Gb)$}}  

\ncarc[arcangle=45]{->}{a1}{a2}\naput[npos=0.6]{{\scriptsize $F$}}
\ncarc[arcangle=-45]{->}{a1}{a3}\nbput[npos=0.6]{{\scriptsize $G$}}

{
\rput[c](17,8.5){\psset{unit=1mm,doubleline=true,arrowinset=0.6,arrowlength=0.5,arrowsize=0.5pt 2.1,nodesep=0pt,labelsep=2pt}
\pcline{->}(0,3)(0,0) \nbput{{\scriptsize $\sigma_{ab}$}}}}


\endpspicture
\]
satisfying the following axioms:

\begin{itemize}
 \item Composition:

\[
\psset{unit=0.09cm,labelsep=2pt,nodesep=2pt}
\pspicture(0,-15)(80,40)

\rput(-5,15){
\pspicture(30,35)

%

\rput(0,30){\rnode{a1}{$(b,c)(a,b)$}}  
\rput(30,33){\rnode{a2}{\makebox[4em][l]{$(Fb,Fc)(Fa,Fb)$}}}  
\rput(38,30){\rotatebox{90}{$=$}}
\rput(30,27){\rnode{a3}{\makebox[4em][l]{$(Gb,Gc)(Ga,Gb)$}}}  

\rput(0,3){\rnode{b1}{$(a,c)$}}
\rput(30,3){\rnode{b2}{\makebox[3em][l]{$(Ga,Gc)$}}}

\ncarc[arcangle=45]{->}{a1}{a2}\naput[npos=0.56]{{\scriptsize $FF$}}
\ncarc[arcangle=-45]{->}{a1}{a3}\nbput[npos=0.56]{{\scriptsize $GG$}}

\ncline{->}{a1}{b1} \nbput{{\scriptsize $m$}}
\ncline{->}{a3}{b2} \naput{{\scriptsize $m'$}}

\ncarc[arcangle=-45]{->}{b1}{b2}\nbput[npos=0.5]{{\scriptsize $G$}}

{
\rput[c](17,28.5){\psset{unit=1mm,doubleline=true,arrowinset=0.6,arrowlength=0.5,arrowsize=0.5pt 2.1,nodesep=0pt,labelsep=2pt}
\pcline{->}(0,3)(0,0) \nbput{{\scriptsize $\sigma\sigma$}}}}

{
\rput[c](17,8.5){\psset{unit=1mm,doubleline=true,arrowinset=0.6,arrowlength=0.5,arrowsize=0.5pt 2.1,nodesep=0pt,labelsep=2pt}
\pcline{-}(0,2)(0,0) \nbput{{\scriptsize $$}}}}


\rput(50,15){$=$}

\endpspicture}

\rput(60,15){\pspicture(30,35)

%
%

\rput(0,30){\rnode{a1}{$(b,c)(a,b)$}}  
\rput(30,30){\rnode{a2}{\makebox[4em][l]{$(Fb,Fc)(Fa,Fb)$}}}  

\rput(0,3){\rnode{b1}{$(a,c)$}}
\rput(30,6){\rnode{b2}{\makebox[3em][l]{$(Fa,Fc)$}}}

\rput(32,3){\rotatebox{90}{$=$}}
\rput(30,0){\rnode{b3}{\makebox[3em][l]{$(Ga,Gc)$}}}  

\ncarc[arcangle=45]{->}{a1}{a2}\naput[npos=0.56]{{\scriptsize $FF$}}

\ncline{->}{a1}{b1} \nbput{{\scriptsize $m$}}
\ncline{->}{a2}{b2} \naput{{\scriptsize $m'$}}

\ncarc[arcangle=45]{->}{b1}{b2}\naput[npos=0.56]{{\scriptsize $F$}}
\ncarc[arcangle=-45]{->}{b1}{b3}\nbput[npos=0.56]{{\scriptsize $G$}}

{
\rput[c](16,21){\psset{unit=1mm,doubleline=true,arrowinset=0.6,arrowlength=0.5,arrowsize=0.5pt 2.1,nodesep=0pt,labelsep=2pt}
\pcline{-}(0,2)(0,0) \nbput{{\scriptsize $$}}}}

{
\rput[c](17,0.5){\psset{unit=1mm,doubleline=true,arrowinset=0.6,arrowlength=0.5,arrowsize=0.5pt 2.1,nodesep=0pt,labelsep=2pt}
\pcline{->}(0,3)(0,0) \nbput{{\scriptsize $\sigma$}}}}


\eps}

\endpspicture
\]

\item Unit:

\[
\psset{unit=0.14cm,labelsep=2pt,nodesep=2pt}
\pspicture(70,20)

\rput(15,10){\pspicture(45,20)


\rput(2,10){\rnode{a1}{$1$}}
\rput(25,18){\rnode{a2}{$(a,a)$}}
\rput(25,2){\rnode{a3}{$(Fa,Fa)=(Ga,Ga)$}}

\ncline{->}{a1}{a2} \naput{{\scriptsize $I$}}
\ncline{->}{a1}{a3} \nbput{{\scriptsize $I'$}}

\ncarc[arcangle=45]{->}{a2}{a3}\naput[npos=0.5]{{\scriptsize $G$}}
\ncarc[arcangle=-45]{->}{a2}{a3}\nbput[npos=0.5]{{\scriptsize $F$}}

{
\rput[c](14,9){\psset{unit=1mm,doubleline=true,arrowinset=0.6,arrowlength=0.5,arrowsize=0.5pt 2.1,nodesep=0pt,labelsep=-2pt}
\pcline{-}(0,0)(2,2) \naput{{\scriptsize $$}}}}

{
\rput[c](24,10){\psset{unit=1mm,doubleline=true,arrowinset=0.6,arrowlength=0.5,arrowsize=0.5pt 2.1,nodesep=0pt,labelsep=2pt}
\pcline{->}(0,0)(4,0) \nbput{{\scriptsize $\sigma$}}}}

\rput(40,10){$=$}

\endpspicture}

\rput(50,10){\pspicture(30,20)


\rput(2,10){\rnode{a1}{$1$}}
\rput(25,18){\rnode{a2}{$(a,a)$}}
\rput(25,2){\rnode{a3}{$(Fa,Fa)=(Ga,Ga)$}}

\ncline{->}{a1}{a2} \naput{{\scriptsize $I$}}
\ncline{->}{a1}{a3} \nbput{{\scriptsize $I'$}}

\ncarc[arcangle=45]{->}{a2}{a3}\naput[npos=0.5]{{\scriptsize $G$}}

{
\rput[c](17,9){\psset{unit=1mm,doubleline=true,arrowinset=0.6,arrowlength=0.5,arrowsize=0.5pt 2.1,nodesep=0pt,labelsep=-2pt}
\pcline{-}(0,0)(2,2) \naput{{\scriptsize $$}}}}


\eps}

\endpspicture\]

\end{itemize}

\end{definition}

In \cite[Prop. 3.7]{cg4} it is shown that for a 2-category $K$, weak $K$-categories, weak $K$-functors and $K$-icons form a 2-category; it follows that weak $K$-categories, \emph{strict} $K$-functors and $K$-icons also form a 2-category.

\begin{definition}\label{kwcat}
Given a 2-category $K$, write \cat{$K$-wCat} for the 2-category of weak $K$-categories, strict $K$-functors, and $K$-icons.
\end{definition}

Finally, we need the following result to ensure that we can iterate the weak enrichment construction.

\begin{proposition} \label{kprods}
Let $K$ be a 2-category with products. Then \cat{$K$-wCat} has products.
\end{proposition}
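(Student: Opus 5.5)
We construct products explicitly, componentwise, and then verify the universal property in the strict 2-categorical sense.

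\emph{The product object.} Given weak $K$-categories $X$ and $Y$, define $X \times Y$ as follows.
\begin{itemize}
\item Its objects are $(X\times Y)_0 = X_0 \times Y_0$.
\item Its hom-objects are $(X\times Y)((a,a'),(b,b')) = X(a,b)\times Y(a',b')$, using products in $K$.
\item Composition is the composite
\[X(b,c)\times Y(b',c')\times X(a,b)\times Y(a',b') \tra X(b,c)\times X(a,b)\times Y(b',c')\times Y(a',b') \tramap{m\times m'} X(a,c)\times Y(a',c').\]
The first map is the canonical symmetry isomorphism, which exists because $K$ is cartesian.
\item Identities are $1 \cong 1\times 1 \tramap{I\times I'} X(a,a)\times Y(a',a')$.
\item The constraints $r,l,\alpha$ are the products of the corresponding constraints of $X$ and $Y$, whiskered by the canonical shuffle isomorphisms.
\end{itemize}
Since products in $K$ have the strict 2-dimensional universal property, $\times$ is a strict 2-functor $K\times K\tra K$. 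The canonical isomorphisms are therefore natural strictly, and 2-cells into a product are determined by their two components. The associativity and unit axioms for $X\times Y$ then hold because each side, after composing with the projections, reduces to the corresponding axiom in $X$ and in $Y$. The terminal object is built the same way: one object, with hom-object the terminal object $1$ of $K$, and all constraints identities. Arbitrary (small) products, if $K$ has them, are handled identically.

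\emph{Projections and pairing.} The projections $P\: X\times Y\tra X$ are identity on the first factor of objects, and the product projections of $K$ on homs. These are strict $K$-functors: the strict 2-cell constraint squares commute on the nose because the projections of $K$ commute strictly with $m\times m'$ and the shuffles. The associativity and unit axioms for $P$ hold because the projection of $\alpha\times\alpha'$ is $\alpha$. Given strict $K$-functors $F\:Z\tra X$ and $G\:Z\tra Y$, the pairing $\langle F,G\rangle$ is $\langle F_0,G_0\rangle$ on objects and $\langle F_{ab},G_{ab}\rangle$ on homs. Its strictness and axioms are checked by postcomposing with projections, using uniqueness of maps into a product in $K$. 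Uniqueness of the pairing is immediate, since a strict $K$-functor into $X\times Y$ is determined by its object function and hom 1-cells, and both are determined by their projections.

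\emph{2-dimensional universal property.} Given $K$-icons $\sigma\:F\Rightarrow F'$ and $\tau\:G\Rightarrow G'$, the pairing icon has components $\langle\sigma_{ab},\tau_{ab}\rangle$. The icon condition $\langle F,G\rangle=\langle F',G'\rangle$ on objects holds componentwise. The composition and unit axioms for this icon reduce, via projections, to those for $\sigma$ and $\tau$. Uniqueness again follows from the 2-dimensional universal property of products in $K$.

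The only real obstacle is bookkeeping. One must confirm that the shuffle isomorphisms interact correctly with the pasting diagrams: for instance, that the associativity cube for $X\times Y$ is exactly the shuffled product of the two cubes. This holds because, in a cartesian 2-category with strict products, every diagram of canonical isomorphisms commutes and is 2-natural. So the whole verification reduces to ``check after projecting'', and we rely on this rather than drawing out the cubes.
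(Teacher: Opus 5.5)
Your proof is correct and is essentially the paper's argument. The paper takes the tensor product of weak $K$-categories from \cite[Theorem 3.8]{cg4} for the cartesian monoidal structure on $K$, which is exactly your componentwise construction with shuffle isomorphisms, and calls the universal property routine; you instead verify the axioms and the 1- and 2-dimensional universal property directly by projecting.

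One small correction: it is not true that every diagram of canonical isomorphisms in a cartesian 2-category commutes (the symmetry on $A\times A$ is not the identity). Your argument does not need that claim, only that 1-cells and 2-cells into a product in $K$ are determined by their projections, which does hold.
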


\begin{proof}
We know from \cite[Theorem 3.8]{cg4} that if $K$ is a symmetric monoidal 2-category we can construct a tensor product of weak $K$-categories. In our case, the symmetric monoidal structure on $K$ is (strict) products, and it is routine to check that the tensor product induced on weak $K$-categories inherits the universal property of a strict product. 
\end{proof}

\noi Note that eventually this notation will become a little unwieldy and we will write $\Cat(K)$ for $\cat{$K$-wCat}$; while this may seem misleading in a broader context, it is unambiguous in this work as we are never working with internal categories or strict enrichment.


\subsection{Background on $n$-monoidal categories}

Our environment for weak \eh\ is going to be categories equipped with $n$ monoidal structures in a suitably coherent way. The literature on this is somewhat piecemeal, so we think it will be helpful to make some details explicit.

The various treatments of this topic include different levels of strictness, and different levels of abstraction. The definition can be made directly via a long list of axioms, or abstractly via iterated internalisation. These concepts have been casually conflated in the literature but we find it helpful to keep them separate. On the one hand, we will define ``$n$-monoidal categories'' to be categories equipped with $n$ monoidal structures satisfying various axioms. On the other hand, we will define ``$n$-fold internal pseudomonoids'' by iterating a pseudomonoid construction, which we will in turn define via degenerate weak enrichment. Note that in this work we will be defining these structures directly, not as algebras for a monad, so the monoidal structures are all biased.

When iterating an internal monoid construction, the different levels of strictness in the literature essentially arise from

\begin{itemize}

\item The level of strictness of the internal monoids themselves; these are sometimes weak and sometimes strict.

\item The level of strictness of the morphisms between monoids, giving rise to interchange and some slightly technical unit conditions. In the literature the functoriality is usually lax, where ours will be strict, but sometimes the unit conditions for the lax morphisms are weak or even strict.

\end{itemize}

\noi Here is a summary of the levels used; each work has a specific use in mind and so uses the appropriate level of strictness for that:

\begin{itemize}

\item Balteanu, Fiedorowicz, Schw\"anzl, Vogt \cite{bfsv1}: strict monoids, lax morphisms with strict unit conditions

\item Forcey and Humes \cite{fh1}: weakly associative strictly unital monoids, lax morphisms with strict unit conditions

\item Aguiar and Mahajan \cite{am1}: weak monoids, lax morphisms with lax unit conditions (also Batanin and Markl \cite{bm1})

\item Garner and L\'opez Franco \cite{glf1} (normal duoidal categories): weak monoids, lax morphisms with weak unit conditions

\item present work: weak monoids, strict morphisms

\end{itemize}

\noi Thus the structures we need are substantially stricter than generality. In effect, the strict morphisms between weak monoids give rise to strict interchange between the different monoidal structures (and also some other technical conditions on units) and strict interchange in our eventual $n$-categories. A further level of strictness we will use is that we will always work in strict 2-categories, with symmetric monoidal structure always given by products.

%

Internal pseudomonoids can be defined directly, but we prefer to define them via degenerate enriched categories, as we eventually rely on that relationship.

\begin{definition}\label{dcatk}
Let $K$ be a cartesian 2-category. We define the 2-category $d\big(\Cat(K)\big)$ to be the full sub-2-category of $\Cat(K)$ whose 0-cells are the degenerate $K$-categories, that is, those whose set of objects is terminal.
\end{definition}

\begin{definition}\label{psmon}
A \emph{pseudomonoid in $K$} is a degenerate weak $K$-category where we have ignored the trivial set of objects; that is, it is a 0-cell of $K$ equipped with 1-cells for unit and multiplication, 2-cells for constraints, and the axioms as in Definition~\ref{wencat}. We write $\Mon(K)$ for the 2-category $d\big(\Cat(K)\big)$ regarded in this way; the 0-cells are pseudomonoids in $K$, the 1-cells are strict maps, and the 2-cells are transformations.
\end{definition}

We could more expressively write this 2-category as $\cat{PsMon}{(K)}_s$, but here again the notation will become unwieldy so we have decided to use more streamlined notation; as we will never be considering strict internal monoids or weak maps in this work this will not be ambiguous.

Pseudomonoids in $K$ are defined directly, without reference to degenerate weak $K$-categories, in \cite{mcc2} and \cite{am1}. Those definitions are weaker than we need, but invoking the appropriate extra strictness those definitions are the same as our definition via weak $K$-categories.

The level of strictness can be exemplified by our key example of $K = \Cat$.

\begin{example}
$\Mon(\Cat) = \cat{MonCat}_s$, the 2-category of weak monoidal categories, strict monoidal functors, and monoidal transformations.
\end{example}

The following result enables us to iterate the internal monoid construction.

\begin{proposition}
If $K$ has products then $\Mon(K)$ has products.
\end{proposition}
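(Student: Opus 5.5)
The plan is to deduce the result from Proposition~\ref{kprods}, using the description of $\Mon(K)$ as the full sub-2-category $d\big(\Cat(K)\big)$ of degenerate weak $K$-categories from Definitions~\ref{dcatk} and~\ref{psmon}. By Proposition~\ref{kprods}, $\Cat(K)$ has products. So it suffices to show two things: that the product in $\Cat(K)$ of degenerate weak $K$-categories is again degenerate, and that a full sub-2-category closed under the ambient products inherits them.

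First I would recall the construction from the proof of Proposition~\ref{kprods}. Given weak $K$-categories $X$ and $Y$, the product $X \times Y$ has object set $X_0 \times Y_0$ and hom-objects $X(a,b) \times Y(a',b')$. Its composition, identities and constraint 2-cells are defined componentwise using the products in $K$. If $X_0$ and $Y_0$ are both terminal, then so is $X_0 \times Y_0$, so $X \times Y$ is degenerate.

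The same holds for an arbitrary (possibly infinite or empty) family. The empty product is the terminal weak $K$-category: it has one object, hom-object the terminal $1$ of $K$, and all structure trivial, so it too is degenerate. Explicitly, for pseudomonoids $(M,m,I,\alpha,l,r)$ and $(N,m',I',\alpha',l',r')$, the product is $M \times N$ in $K$. Its multiplication is
\[
(M\times N)\times(M\times N) \cong (M\times M)\times(N\times N) \xrightarrow{m\times m'} M\times N,
\]
its unit is $(I,I')$, and its constraints are $\alpha\times\alpha'$, $l\times l'$ and $r\times r'$.

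Next comes the formal step. Since $d\big(\Cat(K)\big)$ is a full sub-2-category, for degenerate $Z$ the hom-categories $\Mon(K)(Z, X\times Y)$ and $\Cat(K)(Z,X\times Y)$ coincide. The strict 2-universal property in $\Cat(K)$, namely the isomorphism of categories $\Cat(K)(Z,X\times Y)\cong \Cat(K)(Z,X)\times\Cat(K)(Z,Y)$, therefore restricts verbatim. The projections are strict $K$-functors between degenerate objects, so they lie in $\Mon(K)$.

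The only point needing care, and the one I expect to be the main obstacle, is checking that the terminal-object condition really survives the construction in Proposition~\ref{kprods}. That proof obtains the product from the tensor product of \cite{cg4}, so one must confirm that the tensor's object set is the cartesian product of object sets. This is true by construction of the tensor of enriched categories, which I would verify by citing that construction.

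A secondary check is that the pseudomonoid axioms for $M\times N$ hold. This is immediate because pasting in $K$ is computed componentwise on products, but it is already covered by Proposition~\ref{kprods} and need not be redone.
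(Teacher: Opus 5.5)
Your proposal is correct and is essentially the paper's proof: take the product in $\Cat(K)$ supplied by Proposition~\ref{kprods}, observe that its object set $A_0 \times B_0$ is terminal when $A_0 = B_0 = 1$, and restrict the universal property using the fact that $d\big(\Cat(K)\big)$ is a full sub-2-category. Your additions are consistent with this, namely the terminal object and the explicit componentwise pseudomonoid structure on $M \times N$. The one exception is the aside about infinite families, which goes beyond what Proposition~\ref{kprods} provides, since the tensor-product construction there only yields finite products.
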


\begin{proof}
Given 0-cells $A, B \in \Mon(K)$ we know by Proposition~\ref{kprods} that they have a product in $\Cat(K)$, so we just need to show this is a product in $d\big(\Cat(K)\big)$. Now, the 0-cells of $A \times B$ as weak $K$-categories are given by $A_0 \times B_0$, so if $A_0 = B_0 = 1$ then $(A \times B)_0 = 1$, that is, $A \times B$ is degenerate. As we are working in a full sub-2-category of $\Cat(K)$, the universal property follows.
%
\end{proof}

We can now make the following iterated definition.

\begin{definition}
Let $K$ be a cartesian 2-category and $n \geq 1$. We write $\Mon^n(K)$ for the 2-category defined as follows:

\begin{itemize}

\item For $n=1$: $\Mon^1(K) := \Mon(K)$.

%

\item For $n > 1$: $\Mon^n(K) := \Mon\big(\Mon^{n-1}(K)\big)$.


\end{itemize}

\noi We call the 0-cells of $\Mon^n(K)$ \emph{$n$-fold internal monoids in $K$}.

\end{definition}

Unravelling the structure of a 2-fold internal monoid in $\Cat$ gives us the notion of a 2-monoidal category. In the literature it is fairly standard to conflate 2-monoidal categories and 2-fold internal monoids in \cat{Cat}. We find it helpful to distinguish between those equivalent structures.  The definition of 2-monoidal category can be read off \cite[Definition 6.1]{am1} with some extra strictness, but we prefer to find it by unravelling the definition of a 2-fold internal monoid in \Cat, to see where the axioms come from. 

We will write out this definition in full, partly because it is moderately enlightening, and partly because we will need to refer to some of these commuting diagrams precisely, when we prove that the Eckmann--Hilton Sphere commutes. We have chosen to present the axioms in a table as it helps us to see how the dual axioms arise from non-dual parts of the structure.

A 2-fold internal monoid in \Cat\ is a pseudomonoid in $\cat{MonCat}_s$, the cartesian 2-category of weak monoidal categories, strict functors, and monoidal transformations. Thus it is given by

\begin{itemize}

\item a weak monoidal category $A$, equipped with

\item strict monoidal functors $1 \tmap{\eta} A \tlmap{\mu} A \times A$, and

\item monoidal transformations $\alpha, l, r$ for associativity and left and right unit constraints

\end{itemize}

\noi satisfying the associativity and unit axioms.  We write the underlying monoidal structure of $A$ as $\big(A, (\vtp{\white{1}}{\white{1}}), V, \alpha, \tau, \beta\big)$, so the tensor product of $a$ and $b$ is \hs{0.0} $\vtp{a}{b}$ \hs{0.0} and the unit constraints are:
\[\vtp{a}{V} \hs{0.4} \tmap{\beta} \hs{0.2}  a, \hs{2} \vtp{V}{a}  \hs{0.4} \tmap{\tau} \hs{0.2}  a.\]

\noi (Note that we will write $\alpha$ for both the vertical and horizontal associativity constraints as there is no ambiguity.) Then the underlying functors, transformations, and axioms for $\eta, \mu, \alpha, l, r$ give another monoidal structure on $A$, which we write as $\big(A, (\htp{}{}), H, \alpha, l, r\big)$, so $\mu(a, b) = \htp{a}{b}$ and the unit constraints are

\[\htp{a}{H} \hs{0.0} \tmap{r} \hs{0.2}  a, \hs{2} \htp{H}{a}  \hs{0.0} \tmap{l} \hs{0.2}  a.\]

\noi So we know that we at least have two monoidal structures on $A$. We also have extra compatibility axioms coming from the monoidal conditions on $\eta, \mu, \alpha, l, r$. Note that a monoidal functor has 2 constraints, which we will call functoriality and unit (which are both identities in our case of strict monoidal functors), and 3 axioms, which we will call associativity, pre-unit, and post-unit (usually left and right). A monoidal transformation has 2 axioms, which we will call associativity and unit. We will write in some of the identities and even give some of them names as we find this helps with clarity.

\begin{proposition}\label{twofoldaxioms}
A 2-fold internal pseudomonoid in the 2-category $\cat{Cat}$ is equivalently a category equipped with two monoidal structures as above, with the following additional structure and axioms:

\[\pspicture(30,0)(160,190)
\begin{small}


\rput(50,201){\sf monoidal functor data}

\rput(50,182){\sf functoriality constraint}
\rput(50,167){\sf unit constraint}
\rput(50,120){\sf associativity axiom}
\rput(50,59){\sf pre-unit axiom}
\rput(50,18){\sf post-unit axiom}


\rput(105,0){\psline[linecolor=gray!60!white](0,0)(0,210)}
\rput(68,0){\psline[linecolor=gray!60!white](0,0)(0,210)}

\rput(0,191){\psline[linecolor=gray!60!white](30,0)(160,0)}
\rput(0,173){\psline[linecolor=gray!60!white](30,0)(160,0)}
\rput(0,161){\psline[linecolor=gray!60!white](30,0)(160,0)}
\rput(0,80){\psline[linecolor=gray!60!white](30,0)(160,0)}
\rput(0,40){\psline[linecolor=gray!60!white](30,0)(160,0)}


\rput(87,200){
\pspicture(-10,0)(10,14)

\rput(-10,10){\rnode{a1}{$1$}}  
\rput(10,10){\rnode{a2}{$A$}}  

\rput(-10,3){\rnode{b1}{$\ast$}}  
\rput(10,3){\rnode{b2}{$H$}}  

\ncline{->}{a1}{a2} \naput{{\scriptsize $\eta$}} 
\ncline{|->}{b1}{b2} \naput{{\scriptsize $$}} 

\endpspicture
}

\rput(87,182){
\pspicture(-10,-14)(10,14)

\rput(-10,0){\rnode{a1}{$\vtp{H}{H}$}}  
\rput(10,0){\rnode{a2}{$H$}}  

\ncline[nodesep=4pt]{->}{a1}{a2} \naput{\scr $\phi$} \nbput{{\scriptsize $=$}}  

\endpspicture
}

\rput(87,167){
\pspicture(-10,-14)(10,14)

\rput(-10,0){\rnode{a1}{$V$}}  
\rput(10,0){\rnode{a2}{$H$}}  

\ncline{->}{a1}{a2} \naput{{\scriptsize $=$}} 

\endpspicture
}


\rput(87,120){\pspicture(-10,-14)(10,14)



\rput(-10,20){\rnode{a1}{$\vtp{\vtp{H}{H}}{\hs{0.3}H\hs{0.3}}$}}  
\rput(10,20){\rnode{a2}{$\vtp{\hs{0.3}H\hs{0.3}}{\vtp{H}{H}}$}}  
\rput(-10,0){\rnode{a3}{$\vtp{H}{H}$}}  
\rput(10,0){\rnode{a4}{$\vtp{H}{H}$}}  
\rput(-10,-20){\rnode{a5}{$H$}}  
\rput(10,-20){\rnode{a6}{$H$}}  

\ncline{->}{a1}{a2} \naput{{\scriptsize $\alpha$}} 
\ncline{->}{a5}{a6} \nbput{{\scriptsize $=$}} 

\ncline{->}{a1}{a3} \nbput{{\scriptsize $=$}} 
\ncline{->}{a3}{a5} \nbput{{\scriptsize $=$}} 

\ncline{->}{a2}{a4} \naput{{\scriptsize $=$}} 
\ncline{->}{a4}{a6} \naput{{\scriptsize $=$}} 

\endpspicture}


\rput(87,59){\pspicture(-10,-14)(10,14)



\rput(-10,15){\rnode{a1}{$\vtp{H}{V}$}}  
\rput(10,15){\rnode{a2}{$H$}}  
\rput(-10,0){\rnode{a3}{$\vtp{H}{H}$}}  
\rput(-10,-15){\rnode{a5}{$H$}}  

\ncline{->}{a1}{a2} \naput{{\scriptsize $\beta_H$}} 
\ncline{->}{a5}{a2} \nbput{{\scriptsize $=$}} 

\ncline[nodesepA=0pt]{->}{a1}{a3} \nbput{{\scriptsize $=$}} 
\ncline{->}{a3}{a5} \nbput{{\scriptsize $=$}} 


\endpspicture}


\rput(87,18){\pspicture(-10,-14)(10,14)



\rput(-10,15){\rnode{a1}{$\vtp{V}{H}$}}  
\rput(10,15){\rnode{a2}{$H$}}  
\rput(-10,0){\rnode{a3}{$\vtp{H}{H}$}}  
\rput(-10,-15){\rnode{a5}{$H$}}  

\ncline{->}{a1}{a2} \naput{{\scriptsize $\tau_H$}} 
\ncline{->}{a5}{a2} \nbput{{\scriptsize $=$}} 

\ncline[nodesepA=0pt]{->}{a1}{a3} \nbput{{\scriptsize $=$}} 
\ncline{->}{a3}{a5} \nbput{{\scriptsize $=$}} 


\endpspicture}


\rput(132,200){
\pspicture(-10,0)(10,14)

\rput(-10,10){\rnode{a1}{$A \times A$}}  
\rput(10,10){\rnode{a2}{$A$}}  

\rput(-10,4){\rnode{b1}{$a, b$}}  
\rput(10,4){\rnode{b2}{$\htp{a}{b}$}}  

\ncline{->}{a1}{a2} \naput{{\scriptsize $\mu$}} 
\ncline{|->}{b1}{b2} \naput{{\scriptsize $$}} 

\endpspicture
}

\rput(132,182){
\pspicture(-10,-14)(10,14)

\rput(-10,0){\rnode{a1}{$\vvtp{\htp{\a}{\b}}{\htp{\c}{\d}}$}}  
\rput(10,0){\rnode{a2}{$\htp{\vtp{\a}{\c}}{\vtp{\b}{\d}}$}}  

\ncline[nodesep=4pt]{->}{a1}{a2} \naput{{\scriptsize $\chi$}} \nbput{\scr $=$} 

\endpspicture
}

\rput(132,167){
\pspicture(-10,-14)(10,14)

\rput(-10,0){\rnode{a1}{$V$}}  
\rput(10,0){\rnode{a2}{$\htp{V}{V}$}}  

\ncline{->}{a1}{a2} \naput{\scr $\phi$} \nbput{{\scriptsize $=$}} 

\endpspicture
}


\rput(132,120){\psset{unit=0.1cm,labelsep=3pt,nodesep=3pt}
\pspicture(-10,-24)(10,24)



\rput(-12,28){\rnode{tl}{$\vtp{\vtp{\htp{\a}{\b}\vs{0.2}}{\htp{\c}{\d}\vs{0.2}}}{\hs{0.3}\htp{\e}{\f}\hs{0.3}}$}}  

\rput(12,28){\rnode{tr}{$\vtp{\hs{0.3}\htp{\a}{\b}\vs{0.2}\hs{0.3}}{\vtp{\raisebox{-0.2em}{\htp{\c}{\d\vs{0.1}}}}{\htp{\e}{\f}}}$}}  

\rput(-12,0){\rnode{ml}{$\vtp{\htp{\vtp{\a}{\c}}{\vtp{\b}{\d}\vs{0.2}}}{\hs{0.3}\htp{\e}{\f}\hs{0.3}}$}}  

\rput(12,0){\rnode{mr}{$\vtp{\hs{0.3}\htp{\a}{\b}\hs{0.3}\vs{0.2}}{\raisebox{-0.7em}{\htp{\vtp{\c}{\e}}{\vtp{\d}{\f}}}}$}}  

\rput(-12,-28){\rnode{bl}{$\htp{\vtp{\vtp{\a}{\c}}{\hs{0.3}\e\hs{0.3}}}{\vtp{\vtp{\b}{\d}}{\hs{0.3}\f\hs{0.3}}}$}}  

\rput(12,-28){\rnode{br}{$\htp{\vtp{\hs{0.3}\a\hs{0.3}}{\vtp{\c}{\e}}}{\vtp{\hs{0.3}\b\hs{0.3}}{\vtp{\d}{\f}}}$}}  

\psset{arrows=->}

\ncline{tl}{tr} \naput{{\scriptsize $\alpha$}} 
\ncline{bl}{br} \nbput{{\scriptsize $\htp{\alpha}{\alpha}$}} 
\ncline{tl}{ml} \nbput{{\scriptsize $\vtp{\chi}{1}$}} \naput{\scr $=$} 
\ncline{ml}{bl} \nbput{{\scriptsize $\chi$}} \naput{\scr $=$} 
\ncline{tr}{mr} \naput{{\scriptsize $\vtp{1}{\chi}$}} \nbput{\scr $=$} 
\ncline{mr}{br} \naput{{\scriptsize $\chi$}} \nbput{\scr $=$} 

\eps}


\rput(132,59){
\pspicture(-22,-12)(22,12)

\psset{nodesep=4pt}


\rput[B](-19,11){\Rnode{a1}{$\vvtp{\htp{\a}{\b}}{V}$}}  
\rput[B](0,11){\Rnode{a2}{$\vvtp{\htp{\a}{\b}}{\htp{V}{V}}$}}  
\rput[B](19,11){\Rnode{a3}{$\htp{\vtp{\a}{V}}{\vtp{\b}{V}}$}}  

\rput[B](-19,-11){\Rnode{b1}{$\htp{\a}{\b}$}}  
\rput[B](19,-11){\Rnode{b3}{$\htp{\a}{\b}$}}  

\ncline{->}{a1}{a2} \naput[labelsep=0pt]{{\scalebox{0.8}{$\vtp{1}{\phi}$}}} \nbput{$=$} 
\ncline{->}{b1}{b3} \nbput{{\scriptsize $=$}} 
\ncline{->}{a2}{a3} \naput[npos=0.5]{\scr $\chi$} \nbput{\scr $=$} 
%
%
\ncline{->}{a1}{b1} \nbput{{\scriptsize $\beta$}} 
\ncline{->}{a3}{b3} \naput{{\scriptsize $\htp{\beta}{\beta}$}} 

\rput(0,0){\sf key axiom $\chi$--$\beta$}

\endpspicture}



\rput(132,18){
\pspicture(-22,-12)(22,12)

\psset{nodesep=4pt}


\rput[B](-19,11){\Rnode{a1}{$\vvtp{V}{\htp{\a}{\b}}$}}  
\rput[B](0,11){\Rnode{a2}{$\vvtp{\htp{V}{V}}{\htp{\a}{\b}}$}}  
\rput[B](19,11){\Rnode{a3}{$\htp{\vtp{V}{\a}}{\vtp{V}{\b}}$}}  

\rput[B](-19,-11){\Rnode{b1}{$\htp{\a}{\b}$}}  
\rput[B](19,-11){\Rnode{b3}{$\htp{\a}{\b}$}}  

\ncline{->}{a1}{a2} \naput[labelsep=0pt]{{\scalebox{0.8}{$\vtp{\phi}{1}$}}} \nbput{$=$} 
\ncline{->}{b1}{b3} \nbput{{\scriptsize $=$}} 
\ncline{->}{a2}{a3} \naput[npos=0.5]{\scr $\chi$} \nbput{\scr $=$} 
%
%
\ncline{->}{a1}{b1} \nbput{{\scriptsize $\tau$}} 
\ncline{->}{a3}{b3} \naput{{\scriptsize $\htp{\tau}{\tau}$}} 

\rput(0,0){\sf key axiom $\chi$--$\tau$}

\endpspicture}


\end{small}
\eps\]



\[\pspicture(20,10)(160,130)
\begin{small}


\rput(25,0){\psline[linecolor=gray!60!white](0,10)(0,130)}
\rput(79,0){\psline[linecolor=gray!60!white](0,10)(0,130)}
\rput(121,0){\psline[linecolor=gray!60!white](0,10)(0,130)}

\rput(0,92){\psline[linecolor=gray!60!white](0,0)(160,0)}
\rput(0,49){\psline[linecolor=gray!60!white](0,0)(160,0)}

\rput(12,110){\parbox{10em}{\sf\bc monoidal\\ transformation\\ data \ec}}
\rput(12,70){\parbox{10em}{\sf\bc associativity\\ axiom \ec}}
\rput(12,30){\parbox{10em}{\sf\bc unit\\ axiom \ec}}



\rput(52,110){
\pspicture(-10,-10)(10,10)


\rput(-10,10){\rnode{a1}{$A^3$}} 
\rput(10,10){\rnode{a2}{$A^2$}} 

\rput(-10,-10){\rnode{b1}{$A^2$}}   
\rput(10,-10){\rnode{b2}{$A$}}  

\psset{nodesep=3pt,labelsep=2pt,arrows=->}
\ncline{a1}{a2}\naput{{\scriptsize $A\mu$}} 
\ncline{b1}{b2}\nbput{{\scriptsize $\mu$}} 
\ncline{a1}{b1}\nbput{{\scriptsize $\mu A$}} 
\ncline{a2}{b2}\naput{{\scriptsize $\mu$}} 

\rput(0,0){
\psset{labelsep=1.5pt}
\pnode(2,2){a3}
\pnode(-2,-2){b3}
\ncline[doubleline=true,arrowinset=0.6,arrowlength=0.8,arrowsize=0.5pt 2.1,nodesep=0pt]{a3}{b3} \nbput[npos=0.4]{{\scriptsize $\alpha$}}
}

\endpspicture}


\rput(100,110){
\pspicture(-10,-10)(10,10)


\rput(-10,10){\rnode{a1}{$A$}} 
\rput(10,10){\rnode{a2}{$A^2$}} 

\rput(10,-10){\rnode{b2}{$A$}}  

\psset{nodesep=3pt,labelsep=2pt,arrows=->}
\ncline{a1}{a2}\naput{{\scriptsize $A\eta$}} 
\ncline{a1}{b2}\nbput{{\scriptsize $1$}} 
\ncline{a2}{b2}\naput{{\scriptsize $\mu$}} 

\rput(4,3){
\psset{labelsep=1.5pt}
\pnode(2,2){a3}
\pnode(-2,-2){b3}
\ncline[doubleline=true,arrowinset=0.6,arrowlength=0.8,arrowsize=0.5pt 2.1,nodesep=0pt]{a3}{b3} \nbput[npos=0.4]{{\scriptsize $l$}}
}

\endpspicture}


\rput(141,110){
\pspicture(-10,-10)(10,10)


\rput(-10,10){\rnode{a1}{$A$}} 
\rput(10,10){\rnode{a2}{$A^2$}} 

\rput(10,-10){\rnode{b2}{$A$}}  

\psset{nodesep=3pt,labelsep=2pt,arrows=->}
\ncline{a1}{a2}\naput{{\scriptsize $\eta A$}} 
\ncline{a1}{b2}\nbput{{\scriptsize $1$}} 
\ncline{a2}{b2}\naput{{\scriptsize $\mu$}} 

\rput(4,3){
\psset{labelsep=1.5pt}
\pnode(2,2){a3}
\pnode(-2,-2){b3}
\ncline[doubleline=true,arrowinset=0.6,arrowlength=0.8,arrowsize=0.5pt 2.1,nodesep=0pt]{a3}{b3} \nbput[npos=0.4]{{\scriptsize $r$}}
}

\endpspicture}


\rput(52,70){\psset{unit=0.1cm,labelsep=3pt,nodesep=3pt}
\pspicture(-10,-12)(10,12)



\rput[B](-19,11){\Rnode{a1}{$\vtp{\hhtp{\htp{\a}{\b}}{\c}\vs{0.2}}
{\raisebox{-0.2em}{\hhtp{\htp{\d}{\e}}{\f}}}$}}  

\rput[B](0,11){\Rnode{a2}{$\hhtp{\vtp{\htp{\a}{\b}}{\htp{\d}{\e}}}{\vtp{\c}{\f}}$}}  

\rput[B](19,11){\Rnode{a3}{$\hhtp{\htp{\vtp{\a}{\d}}{\vtp{\b}{\e}}}{\vtp{\c}{\vs{0.1}\raisebox{0.13em}{$\f$}}}$}}  

\rput[B](-19,-11){\Rnode{b1}{$\vtp{\hhtp{\a}{\htp{\b}{\c}}\vs{0.2}}{\raisebox{-0.2em}{\hhtp{\d}{\htp{\e}{\f}}}}$}}  

\rput[B](0,-11){\Rnode{b2}{$\hhtp{\vtp{\a}{\d}}{\vtp{\htp{\b}{\c}\vs{0.05}}{\htp{\e}{\f}}}$}}  

\rput[B](19,-11){\Rnode{b3}{$\hhtp{\vtp{\a}{\vs{0.1}\raisebox{0.14em}{$\d$}}}{\htp{\vtp{\b}{\e}}{\vtp{\c}{\f}}}$}}  


\ncline{->}{a1}{a2} \naput{{\scriptsize $\chi$}} \nbput{\scr $=$} 
\ncline{->}{b1}{b2} \nbput{{\scriptsize $\chi$}} \naput{\scr $=$} 

\ncline{->}{a2}{a3} \naput{{\scriptsize $\htp{\chi}{1}$}} \nbput{\scr $=$} 
\ncline{->}{b2}{b3} \nbput{{\scriptsize $\htp{1}{\chi}$}} \naput{\scr $=$} 

\ncline{->}{a1}{b1} \nbput{{\scriptsize $\vtp{\alpha}{\alpha}$}} 
\ncline{->}{a3}{b3} \naput{{\scriptsize $\alpha$}} 

\eps}


\rput(100,70){
\pspicture(-22,-12)(22,12)


\rput[B](-15,11){\Rnode{a1}{$\vvtp{\htp{H}{\a}}{\htp{H}{\b}}$}}  
\rput[B](0,11){\Rnode{a2}{$\htp{\vtp{H}{H}}{\vtp{\a}{\b}}$}}  
\rput[B](15,11){\Rnode{a3}{$\htp{H}{\vtp{\a}{\b}}$}}  

\rput[B](-15,-11){\Rnode{b1}{$\vtp{\a}{\b}$}}  
\rput[B](15,-11){\Rnode{b3}{$\vtp{\a}{\b}$}}  

\ncline{->}{a1}{a2} \naput[npos=0.45]{{\scriptsize $\chi$}} 
\ncline{->}{b1}{b3} \nbput{{\scriptsize $=$}} 

\ncline{->}{a2}{a3} \naput[npos=0.45]{{\scr {\htp{\phi}{1}}}} \nbput[npos=0.45]{\scr $=$} 

\ncline{->}{a1}{b1} \nbput{{\scriptsize $\vtp{l}{l}$}} 
\ncline{->}{a3}{b3} \naput{{\scriptsize $l$}} 

\rput(0,0){\sf key axiom $\chi$--$l$}

\endpspicture}



\rput(141,70){
\pspicture(-22,-12)(22,12)


\rput[B](-15,11){\Rnode{a1}{$\vvtp{\htp{\a}{H}}{\htp{\b}{H}}$}}  
\rput[B](0,11){\Rnode{a2}{$\htp{\vtp{\a}{\b}}{\vtp{H}{H}}$}}  
\rput[B](15,11){\Rnode{a3}{$\htp{\vtp{\a}{\b}}{H}$}}  

\rput[B](-15,-11){\Rnode{b1}{$\vtp{\a}{\b}$}}  
\rput[B](15,-11){\Rnode{b3}{$\vtp{\a}{\b}$}}  

\ncline{->}{a1}{a2} \naput[npos=0.45]{{\scriptsize $\chi$}} 
\ncline{->}{b1}{b3} \nbput{{\scriptsize $=$}} 

\ncline{->}{a2}{a3} \naput[npos=0.45]{{\scr {\htp{1}{\phi}}}} \nbput[npos=0.45]{\scr $=$} 

\ncline{->}{a1}{b1} \nbput{{\scriptsize $\vtp{r}{r}$}} 
\ncline{->}{a3}{b3} \naput{{\scriptsize $r$}} 

\rput(0,0){\sf key axiom $\chi$--$r$}

\endpspicture}


\rput(52,30){\pspicture(-10,-14)(10,14)



\rput(-12,10){\rnode{a1}{$V$}}  
\rput(12,10){\rnode{a2}{$\hhtp{V}{\htp{V}{V}}$}}  
\rput(12,-10){\rnode{a4}{$\hhtp{\htp{V}{V}}{V}$}}  

\ncline{->}{a1}{a2} \naput{{\scriptsize $=$}} 
\ncline{->}{a1}{a4} \nbput{{\scriptsize $=$}} 
\ncline{->}{a2}{a4} \naput{{\scriptsize $\alpha$}} 

\endpspicture
}


\rput(100,30){\psset{nodesep=4pt}
\pspicture(-10,-14)(10,14)



\rput(-12,10){\rnode{a1}{$V$}}  
\rput(12,10){\rnode{a2}{$\htp{H}{V}$}}  
\rput(12,-10){\rnode{a4}{$V$}}  

\ncline{->}{a1}{a2} \naput{{\scriptsize $=$}} 
\ncline{->}{a1}{a4} \nbput{{\scriptsize $=$}} 
\ncline{->}{a2}{a4} \naput{{\scriptsize $l_V$}} 

\endpspicture
}


\rput(141,30){\psset{nodesep=4pt}
\pspicture(-10,-14)(10,14)



\rput(-12,10){\rnode{a1}{$V$}}  
\rput(12,10){\rnode{a2}{$\htp{V}{H}$}}  
\rput(12,-10){\rnode{a4}{$V$}}  

\ncline{->}{a1}{a2} \naput{{\scriptsize $=$}} 
\ncline{->}{a1}{a4} \nbput{{\scriptsize $=$}} 
\ncline{->}{a2}{a4} \naput{{\scriptsize $r_V$}} 

\endpspicture
}

\end{small}
\eps\]

\end{proposition}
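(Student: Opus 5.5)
The plan is to unravel the definition of a pseudomonoid in $\cat{MonCat}_s = \Mon(\Cat)$ (Definition~\ref{psmon} applied with $K = \cat{MonCat}_s$, using that products in $\cat{MonCat}_s$ are computed componentwise, as in Proposition~\ref{kprods}). We then match each piece of data and each axiom against the table, working in both directions.

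The data of such a pseudomonoid consists of an object $A$ of $\cat{MonCat}_s$, which gives the vertical structure $(A,\vtp{}{},V,\alpha,\tau,\beta)$, together with 1-cells $\eta, \mu$ and 2-cells $\alpha, l, r$ in $\cat{MonCat}_s$. Forgetting down to $\Cat$, the underlying functors and natural transformations, together with the pseudomonoid axioms (which are equations between 2-cells and so can be checked on underlying natural transformations), give exactly the horizontal monoidal structure $(A,\htp{}{},H,\alpha,l,r)$. It remains to read off what it means for $\eta,\mu$ to be strict monoidal functors and for $\alpha,l,r$ to be monoidal transformations.

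\begin{itemize}
\item For $\eta\colon 1 \to A$: the monoidal structure on $1$ is trivial, so strictness of the functoriality constraint gives $\vtp{H}{H} = H$, and strictness of the unit constraint gives $V = H$. The monoidal functor axioms then say that $\alpha_{H,H,H}$, $\beta_H$ and $\tau_H$ are identities. These are the first column of the first table.
\item For $\mu\colon A\times A\to A$: the monoidal structure on $A \times A$ is componentwise. Strict functoriality gives the strict interchange $\chi$, and the strict unit constraint gives $V = \htp{V}{V}$. The associativity, pre-unit and post-unit axioms become, respectively, the $\alpha$-hexagon and the key axioms $\chi$--$\beta$ and $\chi$--$\tau$.
\item For $\alpha, l, r$: the two monoidal-transformation axioms (associativity and unit) are evaluated using the composite monoidal structures on $\mu(A\mu)$, $\mu(\mu A)$, $\mu(A\eta)$ and so on. Since these composites are strict, their constraints are composites of $\chi$ and $\phi$. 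This yields the second table: the associativity square for $\alpha$, the key axioms $\chi$--$l$ and $\chi$--$r$, and the unit triangles $V \to \htp{V}{\htp{V}{V}}$, $l_V$ and $r_V$.
\end{itemize}

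For the converse, given the listed data and axioms, the same dictionary reassembles the monoidal functors $\eta, \mu$ and the monoidal transformations $\alpha, l, r$. The pseudomonoid axioms hold because they already hold for the horizontal monoidal structure, and 2-cells in $\cat{MonCat}_s$ are compared via their underlying natural transformations. The two passages are evidently inverse to one another.

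The main obstacle is purely bookkeeping: correctly computing the monoidal structure on composite functors such as $\mu\circ(\mu\times A)$ and $\mu\circ(A\times\eta)$. This computation is needed so that the transformation axioms come out exactly as the diagrams in the table, with the $\chi$ and $\phi$ identities in the right places. I would handle it by carefully writing the components at generic objects $a,\dots,f$. I would also check that the identity constraints of $1$ and of $A\times A$ introduce no extra conditions beyond the listed ones, which follows from strictness of the products in $\cat{MonCat}_s$.
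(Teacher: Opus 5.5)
Your proposal is correct and is essentially the paper's own argument. The paper gives no separate proof: the table \emph{is} the unravelling of a pseudomonoid in $\cat{MonCat}_s$. Strictness of $\eta$ and $\mu$ gives the functoriality and unit constraints together with their associativity, pre-unit and post-unit axioms (the first table), and the associativity and unit axioms of the monoidal transformations $\alpha, l, r$ give the second table, exactly as you describe. Your explicit treatment of the converse, and your remark that 2-cells in $\cat{MonCat}_s$ are determined by their underlying natural transformations, make precise the ``equivalently'' that the paper leaves implicit.
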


\begin{remarks}
Some key features of this level of strictness to note besides interchange being strict are:

\begin{enumerate}

\item $V = H$, that is, the horizontal and vertical units are the same, and we will generally write this unit as 1 except when we are emphasising that we are treating it as a horizontal or vertical unit.

\item The left-hand column in the first table and the bottom row of the second table show us that the unit 1 is strict on itself. (A priori $H$ is vertically strict on itself and $V$ is horizontally strict on itself, but $V$ and $H$ are the same.)

\item The interaction diagrams between $\chi$ and $\beta, \tau, l, r$ respectively turn out to be crucial, thus we have labelled them ``key axioms''.

\item The only remaining diagrams are the top right in the first table and top left on the second page, giving the interaction between $\chi$ and the vertical and horizontal associators. These will come into play when in future work when we show how to express these structures via distributive laws.


\end{enumerate}

\end{remarks}

Note that there is some asymmetry in these axioms that becomes moot as we are studying weak structures rather than lax ones. For the lax case, the second monoidal structure would be a lax monoid with respect to the first, and the first would become a colax monoid with respect to the second. However, for weak monoids, the second structure is a priori weak with respect to the first, and the first becomes a weak monoid with respect to the second. One consequence is that two monoidal structures end up playing analogous roles in the 2-monoidal structure, so it no longer matters which one is ``first'' and ``second''.

\begin{definition}\label{twomonoidal}
A \emph{2-monoidal category} (called semi-strict when emphasis is needed) is a category equipped with two monoidal structures satisfying the above compatibility axioms.
\end{definition}

Note that being a 2-monoidal category is sufficient but not necessary for weak \eh, just like for standard \eh\ having two monoid structures plus interchange is sufficient but not necessary since, for example, associativity can be deduced.


\begin{proposition}\label{threepsmon}
Let $C \in \Mon^3(\Cat)$. Then $C$ is equipped with $3$ monoidal structures, any two of which form an object of $\Mon^2(\Cat)$.
\end{proposition}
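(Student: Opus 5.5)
Unravel $C\in\Mon^3(\Cat)=\Mon\big(\Mon^2(\Cat)\big)$ one layer at a time, using Proposition~\ref{twofoldaxioms} at each stage. By definition $C$ is a pseudomonoid in the cartesian 2-category $\Mon^2(\Cat)$. Its underlying 0-cell is an object $A$ of $\Mon^2(\Cat)$, which by Proposition~\ref{twofoldaxioms} is a category with two monoidal structures, vertical and horizontal, forming a 2-monoidal category. The pseudomonoid structure on top consists of 1-cells $\eta\colon 1\to A$ and $\mu\colon A\times A\to A$ in $\Mon^2(\Cat)$, together with invertible 2-cells $\alpha,l,r$ in $\Mon^2(\Cat)$ satisfying the pentagon and triangle. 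Forgetting down to $\Cat$ along the forgetful 2-functors $\Mon^2(\Cat)\to\Mon(\Cat)\to\Cat$ (which preserve products, by the product propositions above), the underlying functors and natural transformations of $\eta,\mu,\alpha,l,r$ satisfy the pseudomonoid axioms in $\Cat$. This gives a third, ``depthal'', monoidal structure on the same underlying category. That establishes the three monoidal structures.

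Next I show that each pair forms an object of $\Mon^2(\Cat)$, which is the content to check.

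\emph{Vertical and horizontal.} This pair is exactly the underlying object $A$, so there is nothing to prove.

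\emph{Horizontal and depthal.} A 1-cell of $\Mon^2(\Cat)=\Mon(\Mon(\Cat))$ is a strict $\Mon(\Cat)$-functor between degenerate weak $\Mon(\Cat)$-categories. Its hom-component is a strict monoidal functor for the vertical structure, and it strictly preserves the horizontal multiplication, unit and constraints. So I apply the forgetful 2-functor $U\colon\Mon^2(\Cat)\to\Mon(\Cat)$ that remembers only the horizontal ($\mu$-level) structure, i.e.\ the outer $\Mon$. Since $U$ is a 2-functor preserving products, it sends the pseudomonoid $C$ to a pseudomonoid in $\Mon(\Cat)$, i.e.\ an object of $\Mon^2(\Cat)$, whose two structures are horizontal and depthal.

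\emph{Vertical and depthal.} Similarly, use the 2-functor $\Mon(\Mon(\Cat))\to\Mon(\Cat)$ induced by applying $\Mon(-)$ to the forgetful $\Mon(\Cat)\to\Cat$. This keeps the innermost (vertical) structure. I need that $\Mon(-)$ is functorial on product-preserving 2-functors, which is routine since pseudomonoids are defined by finite-product diagrams. The 1-cells and 2-cells of $C$'s top structure are then vertically strict monoidal functors and vertical monoidal transformations, which is exactly the data of an object of $\Mon^2(\Cat)$ in the vertical and depthal structures.

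The main obstacle is making precise that the ``middle'' structure can be forgotten while keeping the other two as a pseudomonoid in $\Mon(\Cat)$. Concretely, one must check that the interchange data and axioms of Proposition~\ref{twofoldaxioms} between the vertical and depthal structures hold. I would verify this directly: strictness of $\mu,\eta$ as 1-cells in $\Mon^2(\Cat)$ gives strict interchange $\chi$ and the unit identifications with both lower structures simultaneously. The monoidal-transformation axioms for $\alpha,l,r$ in $\Mon^2(\Cat)$ then restrict componentwise to the required key axioms, because icons in $\Mon^2(\Cat)$ are in particular monoidal transformations for each lower structure separately. Finally, I would note that all three units coincide by the remarks after Proposition~\ref{twofoldaxioms}, so the pairwise structures are consistent.
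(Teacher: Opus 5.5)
\paragraph{Overall.} Your strategy is the paper's: you get each pair by pushing $C$ along a product-preserving forgetful 2-functor, using that $\Mon(-)$ is a 2-functor on cartesian 2-categories and product-preserving 2-functors. Two of your three cases are correct.
\begin{itemize}
\item The (vertical, horizontal) case is $U_{\Mon^2(\Cat)}(C)$.
\item The (horizontal, depthal) case is correct once the functor you call $U$ is identified as $\Mon(U_{\Cat})$. The map $\Mon^3(\Cat)\to\Mon^2(\Cat)$ is then $\Mon^2(U_{\Cat})$, which is the paper's first bullet.
\end{itemize}

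\paragraph{The error.} The (vertical, depthal) case uses the wrong functor. The 2-functor $\Mon(U_{\Cat})\colon\Mon(\Mon(\Cat))\to\Mon(\Cat)$ sends a pseudomonoid $(A,\mu,\eta,\alpha,l,r)$ in $\Mon(\Cat)$ to $(U_{\Cat}A,U_{\Cat}\mu,\dots)$. So it discards the monoidal structure of the underlying object, which is the vertical one, and keeps the pseudomonoid structure, which is the horizontal one. It is therefore the same functor you already used in the horizontal--depthal case. Pushing $C$ along $\Mon$ of it gives horizontal and depthal a second time, not vertical and depthal.

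\paragraph{The fix.} The functor that keeps the vertical structure is the plain forgetful $U_{\Mon(\Cat)}\colon\Mon^2(\Cat)\to\Mon(\Cat)$. It discards the outer pseudomonoid structure and remembers the underlying object of $\Mon(\Cat)$. The required map is then $\Mon(U_{\Mon(\Cat)})\colon\Mon^3(\Cat)\to\Mon^2(\Cat)$, exactly as in the paper's second bullet.

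Your last paragraph already contains the right content. The facts that 1-cells of $\Mon^2(\Cat)$ are in particular strict vertical-monoidal functors, and 2-cells are in particular vertical-monoidal transformations, are precisely the action of $U_{\Mon(\Cat)}$ on 1- and 2-cells. So once the correct functor is named, the ``main obstacle'' of forgetting the middle structure is purely formal. It follows from 2-functoriality of $\Mon$ together with product preservation of $U_{\Mon(\Cat)}$. No hands-on check of interchange or the key axioms is needed: Proposition~\ref{twofoldaxioms} supplies those axioms automatically for the resulting object of $\Mon^2(\Cat)$.
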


%

\begin{proof}

First note that $\Mon$ is a 2-functor $\cat{2-Cat}_c \tra \cat{2-Cat}_c$, where $\cat{2-Cat}_c$ denotes the 2-category of cartesian 2-categories, product-preserving functors, and cartesian monoidal transformations. Next note that for any cartesian 2-category $K$ we have a forgetful functor
\[U_K : \Mon(K) \tra K\]
giving the underlying object of each monoid in $K$, and it preserves products so is a 1-cell in $\cat{2-Cat}_c$. Now, certainly each application of $\Mon$ produces another monoidal structure, so $C$ certinly has three monoidal structures and we will write them as $\otimes_1, \otimes_2, \otimes_3$ in order. Then we have three forgetful functors
\[\Mon^3(K) \tra \Mon^2(K)\]
as follows:
\begin{itemize}
\item $\Mon^2( U_K)$, giving $\otimes_2, \otimes_3$
\item $\Mon (U_{\Mon(K)})$, giving $\otimes_1, \otimes_3$
\item $U_{\Mon^2(K)}$, giving $\otimes_1, \otimes_2$

\end{itemize}

\vs{-1.5}

\end{proof}

\begin{corollary}\label{ntothree}
Let $C \in \Mon^n(\Cat)$. Then we can apply the forgetful functor $U$ $n-3$ times to obtain an object of $\Mon^3(\Cat)$.
\end{corollary}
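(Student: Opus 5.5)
The plan is to argue by induction on $n \geq 3$, using only the defining recursion $\Mon^n(K) = \Mon\big(\Mon^{n-1}(K)\big)$ and the forgetful 2-functors $U_K \colon \Mon(K) \tra K$ from the proof of Proposition~\ref{threepsmon}. For $n=3$ there is nothing to do: we apply $U$ zero times. For $n > 3$, the 2-category $\Mon^{n-1}(\Cat)$ is cartesian, by the proposition that $\Mon(K)$ has products whenever $K$ does, applied $n-1$ times starting from $\Cat$. So we have the forgetful 2-functor
\[U_{\Mon^{n-1}(\Cat)} \colon \Mon^n(\Cat) = \Mon\big(\Mon^{n-1}(\Cat)\big) \tra \Mon^{n-1}(\Cat),\]
which sends a pseudomonoid in $\Mon^{n-1}(\Cat)$ to its underlying object. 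Applying it to $C$ gives an object of $\Mon^{n-1}(\Cat)$, and the inductive hypothesis then gives an object of $\Mon^3(\Cat)$ after $(n-1)-3$ further applications, for $n-3$ in total.

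The only points to check are well-definedness issues.
\begin{itemize}
\item Each $U$ lands in the correct 2-category. This is immediate from the definition, since the 0-cells of $\Mon(K)$ are 0-cells of $K$ with extra structure.
\item Each $U$ preserves products. We do not strictly need this to obtain an object, but it is worth recording, as in the proof of Proposition~\ref{threepsmon}.
\end{itemize}

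It is also worth noting which structures survive. Applying $U$ to the outermost level discards the last monoidal structure $\otimes_n$, so the result is $C$ equipped with $\otimes_1, \otimes_2, \otimes_3$. Alternatively, as in Proposition~\ref{threepsmon}, we could apply $\Mon^{j}(U)$ at inner levels to choose any three of the $n$ structures. I would mention this as a remark but keep the proof to the outermost forgetful functor.

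The only mild obstacle is bookkeeping: confirming that $\Mon^{n-1}(\Cat)$ really is cartesian at each stage, so that $U$ is available. This follows by iterating the product proposition, with $\Cat$ cartesian as the base case.
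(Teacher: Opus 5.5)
Your proof is correct and is essentially the argument the paper leaves implicit: the corollary is stated without proof, as an immediate consequence of the recursion $\Mon^n(\Cat)=\Mon\big(\Mon^{n-1}(\Cat)\big)$ together with the forgetful 2-functors $U_K$ from the proof of Proposition~\ref{threepsmon}, applied repeatedly at the outermost level. Your remarks are also consistent with that proof: each step discards the outermost structure $\otimes_n$, and inner forgetful functors can select other triples of monoidal structures.
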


\section{Weak \eh\ on categories}\label{braided}

We will now give a brief recap and overview of the first generalisation of \eha\ to higher dimensions, in which sets with monoid structures are replaced by categories with weak monoidal structures.  Our framework will be a semi-strict 2-monoidal category as in Definition~\ref{twomonoidal}, or equivalently, a 2-fold internal pseudomonoid in \cat{Cat}. Thus we are in particular working with strict interchange (and some other strictness conditions).

This is not the most general case; our main reason for this is that eventually we will study degenerate semi-strict higher categories in which interchange is strict. This is conjectured to be ``weak enough'' to be equivalent to a fully weak version, and so we make use of the simplified combinatorics that the strict interchange yields. In a sense we are trying to do the opposite of a very general situation: instead of being as weak as possible, we want to show how strict the structure can be without collapsing to something non-equivalent.

%

We will start with 2-monoidal categories and show explicitly how to produce braided monoidal categories from them. A more general result for a weaker case was proved by Joyal and Street, with one monoidal structure and one ``multiplication'', which is essentially a monoidal structure without associativity constraints \emph{a priori}; associativity is in any case not involved with \eha. Furthermore, interchange is taken to be weak. In \cite{js1} the proof is more abstract, but in the earlier version of the paper \cite{js3} a commuting diagram is provided, which is essentially the one in our proof below. The main difference is that we write the first monoidal structure horizontally and the second one vertically, which gives some extra geometric insight into the situation. Our calculations are further helped by the extra strictness we are using; in \cite{cc1, cc2} we proved that the extra strictness does not cause a collapse of the structure to a symmetry.



\begin{theorem}[Weak Eckmann--Hilton argument]

Let $(C, (\ |\ ), \left(\frac{\ }{\ }\right), I)$ be a 2-monoidal category. Then the following is a braiding on $(C, \left(\frac{\ }{\ }\right), I)$:

%
%
%
%
%
%
%
%
%
%

\[\psset{unit=1mm}\pspicture(0,-5)(80,7)

\rput(-4,0){\rnode{a1}{\htp{a}{b}}}

\rput(20,0){\rnode{a2}{\fourtp{a}{}{}{b}}}

\rput(40,0){\rnode{a3}{\vtp{a}{b}}}

\rput(60,0){\rnode{a4}{\fourtp{}{a}{b}{}}}

\rput(82,0){\rnode{a5}{\htp{b}{a}}}

\psset{arrows=->, nodesep=7pt}

\ncline[nodesepA=3pt]{a1}{a2} \naput{\scr $\htp{\beta^{-1}}{\tau^{-1}}$}
\ncline{a2}{a3}  \naput{\scr $\vtp{r}{l}$}
\ncline{a3}{a4}  \naput{\scr $\vtp{l^{-1}}{r^{-1}}$}
\ncline[nodesepB=3pt]{a4}{a5}  \naput{\scr $\htp{\tau}{\beta}$}

\eps\]

\end{theorem}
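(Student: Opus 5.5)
Throughout, write $\gamma_{a,b}\colon \htp{a}{b}\to\htp{b}{a}$ for the displayed composite. I will show that $\gamma$ is a natural isomorphism satisfying the two braiding hexagons, where the tensor is the one in which the source and target of $\gamma$ are formed, namely $\htp{}{}$, with associator $\alpha$. I work directly from the data and axioms listed in Proposition~\ref{twofoldaxioms}.

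\textbf{Step 1: invertibility and naturality.} Invertibility is immediate, since each of the four arrows is a tensor of components of the invertible constraints $\beta,\tau,l,r$. The implicit identifications $\htp{\vtp{a}{1}}{\vtp{1}{b}}=\fourtp{a}{}{}{b}$ and $\fourtp{}{a}{b}{}=\htp{\vtp{1}{b}}{\vtp{a}{1}}$ are instances of strict interchange $\chi$ with $V=H=1$. Both tensors are functors, and $\beta,\tau,l,r$ are natural. Hence for $f\colon a\to a'$ and $g\colon b\to b'$, each of the four squares comparing $\gamma_{a,b}$ with $\gamma_{a',b'}$ commutes. Pasting these four squares gives naturality of $\gamma$.

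\textbf{Step 2: the first hexagon.} The aim is to show
\[\alpha\circ\gamma_{a,\htp{b}{c}}\circ\alpha \;=\; \htp{1}{\gamma_{a,c}}\circ\alpha\circ\htp{\gamma_{a,b}}{1}\]
as maps $\htp{\htp{a}{b}}{c}\to\htp{b}{\htp{c}{a}}$. I would carry this out in four moves.
\begin{enumerate}
\item Expand $\gamma_{a,\htp{b}{c}}$. Its components on $\htp{b}{c}$ are $\tau_{\htp{b}{c}}$ and $l^{-1}_{\htp{b}{c}}$ (and their inverses).
\item Use the key axiom $\chi$--$\tau$ to rewrite $\tau_{\htp{b}{c}}$ as $\htp{\tau_b}{\tau_c}$ after a strict interchange. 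This splits the single ``vertical stage'' $\vtp{a}{\htp{b}{c}}$ into a grid of cells.
\item Use the key axioms $\chi$--$l$ and $\chi$--$r$ in the same way, to express $l$ and $r$ applied to vertical composites of horizontally padded cells as vertical composites of $l$'s and $r$'s.
\item Fill the rest of the hexagon with a pasting of three kinds of cell: naturality squares for $\alpha$; functoriality of the two tensors; and the monoidal-transformation associativity axiom for $\alpha$, which lets the horizontal associator pass through a vertical stack of horizontal composites. Wherever $a$ is padded by units, use that $1$ is strict on itself (Remarks (2)), so $l_1$, $r_1$ and the relevant $\alpha$'s on units are identities. This makes the ``$a$ with blanks'' columns on the two sides coincide.
\end{enumerate}

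\textbf{Step 3: the second hexagon.} I would argue analogously, now expanding $\gamma_{\htp{a}{b},c}$. Here the components on the composite $\htp{a}{b}$ are $\beta$ and $r$, so the key axioms used are $\chi$--$\beta$ and $\chi$--$r$, together with the same associativity axiom for $\alpha$.

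\textbf{Main obstacle.} I expect the hexagon pasting to be the main difficulty. The expanded composites pass through vertical stages containing three objects, and it takes care to check that every region of the diagram is one of the listed axioms rather than an unstated coherence. My first attempt would lay out the large diagram with rows indexed by the stages of $\gamma$, and columns by the bracketing. Under strict interchange every row-to-row square is then either naturality or a key axiom, and the remaining triangles collapse by unit strictness.
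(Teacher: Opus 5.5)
Your overall strategy is the paper's. The paper proves one hexagon with a single pasting diagram. One boundary path of that diagram is $\gamma_{a,\htp{b}{c}}$. The only genuinely 2-monoidal input is the key axiom $\chi$--$\tau$, used at the two corners where $\tau$ at $\htp{b}{c}$ occurs, after inserting $\phi$. Every other region is a naturality square for the horizontal tensor, functoriality, or horizontal unit ``coherence''. The other hexagon is declared similar; it would use $\chi$--$\beta$. Two of your details are ones the paper suppresses with ``we ignore associativity \dots\ and invoke coherence'': your naturality check in Step 1, and your explicit use of the $\chi$--$\alpha$ axiom to move the horizontal associator through vertical stacks. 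You also read the statement correctly: $\gamma$ goes from $\htp{a}{b}$ to $\htp{b}{a}$, so it is a braiding on the horizontal tensor.

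There is, however, a concrete misstep in item 3 of Step 2 and again in Step 3.

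\textbf{Why $\chi$--$l$ and $\chi$--$r$ do not apply.} These key axioms concern the horizontal unit constraints at a \emph{vertical} composite: $l$ at $\vtp{x}{y}$ versus $\vtp{l_x}{l_y}$. No such component occurs in either hexagon. Write $b|c$ for $\htp{b}{c}$. The components of $\gamma_{a,b|c}$ are $\beta_a, r_a, l_a$ and $\tau_{b|c}, l_{b|c}, r_{b|c}$; your list omits $r_{b|c}$. The last two are horizontal unit constraints at a \emph{horizontal} composite, about which $\chi$--$l$ and $\chi$--$r$ say nothing. Likewise, in the second hexagon $r_{a|b}$ and $l_{a|b}$ are not governed by $\chi$--$r$; the only key axiom needed there is $\chi$--$\beta$.

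\textbf{What actually closes these regions.} These are the paper's regions marked ``coherence''. They are filled by coherence for the horizontal monoidal structure, applied row by row. First, strict interchange together with the $\chi$--$\alpha$ axiom lets you split each object and morphism into a top row and a bottom row. Then you need the triangle axiom and its standard consequences, for example
\begin{itemize}
\item $l_{b|c}\circ\alpha = \htp{l_b}{1_c}$,
\item $r_{b|c} = \htp{1_b}{r_c}\circ\alpha$,
\item $r_a\circ\htp{l_a}{1} = l_a\circ\htp{1}{r_a}$ up to $\alpha$.
\end{itemize}
Unit strictness ($\htp{1}{1}=1$, $l_1=r_1=\mathrm{id}$) is only one ingredient here. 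It does not close these regions on its own, so your closing claim that the leftover triangles ``collapse by unit strictness'' is too optimistic.

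Replace ``key axioms $\chi$--$l$, $\chi$--$r$'' by ``horizontal unit coherence on each row'' and your pasting goes through. The axioms $\chi$--$l$ and $\chi$--$r$ are what you would need for the analogous braiding on the vertical tensor, the 12 o'clock to 6 o'clock half-circle.
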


\begin{proof}
We need to check two axioms for the braiding. One is as follows:

\[\begin{small}\ps(110,105)


\rput(0,100){\rnode{tl}{\small $\threehtp{a}{b}{c}$}}
\rput(28,100){\rnode{t2}{\small $\htp{\fourtp{a}{}{}{b}}{c}$}}
\rput(52,100){\rnode{t3}{\small $\htp{\vtp{a}{b}}{c}$}}
\rput(77,100){\rnode{t4}{\small $\htp{\fourtp{}{a}{b}{}}{c}$}}
\rput(100,100){\rnode{tr}{\small $\threehtp{b}{a}{c}$}}

\rput(100,77){\rnode{r2}{\small $\htp{b}{\fourtp{a}{}{}{c}}$}}
\rput(100,52){\rnode{r3}{\small $\htp{b}{\vtp{a}{c}}$}}
\rput(100,29){\rnode{r4}{\small $\htp{b}{\fourtp{}{a}{c}{}}$}}
\rput(100,0){\rnode{br}{\small $\threehtp{b}{c}{a}$}}

\rput(18,82){\rnode{m1}{\small $\sixtp{a}{}{}{}{b}{c}$}}
\rput(35,67){\rnode{m2}{\small $\fourtp{a}{}{b}{c}$}}
\rput(51,51){\rnode{m3}{\small $\sixtp{}{a}{}{b}{}{c}$}}
\rput(67,35){\rnode{m4}{\small $\fourtp{}{a}{b}{c}$}}
\rput(83,19){\rnode{m5}{\small $\sixtp{}{}{a}{b}{c}{}$}}

\rput(9,65){\rnode{l2}{\small $\fourtp{a}{}{}{\htp{b}{c}}$}}
\rput(28,35){\rnode{l3}{\small $\vtp{a}{\htp{b}{c}}$}}
\rput(60,10){\rnode{l4}{\small $\fourtp{}{a}{\htp{b}{c}}{}$}}

\psset{arrows=->, nodesep=5pt}

\ncline{t2}{tl} \nbput{\scr $\threehtpscr{\beta}{\tau}{1}$}
\ncline{t2}{t3} \naput{\scr $\htp{\vtp{r}{l}}{1}$}
\ncline{t4}{t3} \nbput{\scr $\htp{\vtp{l}{r}}{1}$}
\ncline{t4}{tr} \naput{\scr $\threehtpscr{\tau}{\beta}{1}$}
\ncline{r2}{tr} \nbput{\scr $\threehtpscr{1}{\beta}{\tau}$}
\ncline{r2}{r3} \naput{\scr $\htp{1}{\vtp{r}{l}}$}
\ncline{r4}{r3} \nbput{\scr $\htp{1}{\vtp{l}{r}}$}
\ncline{r4}{br} \naput{\scr $\threehtpscr{1}{\tau}{\beta}$}
\ncline[nodesep=2pt]{<-}{tl}{m1}  \naput[nrot=:U]{\scr $\threehtpscr{\beta}{\tau}{\tau}$}

\psset{nodesep=2pt}

\ncline[nodesep=2pt]{m1}{t2} \nbput[nrot=:U]{\scr $\threehtp{1}{1}{\tau}$}
\ncline[nodesepA=0pt,nodesepB=2pt]{l2}{m1} \naput[labelsep=0pt]{\scr $\htp{1}{\vtp{\phi}{1}}$}
\ncline[nodesep=2pt]{m1}{m2} \naput{\scr $\htp{\vtp{r}{l}}{1}$}
\ncline{m2}{t3} \nbput{\scr $\htp{1}{\tau}$}
\ncline{m2}{l3} \nbput{\scr $\vtp{r}{1}$}
\ncline{m3}{t4} \naput{\scr $\threehtp{1}{1}{\tau}$}
\ncline[nodesep=2pt]{m3}{m2} \nbput{\scr $\htp{\vtp{l}{r}}{1}$}
\ncline{m3}{m4} \naput[labelsep=-1pt]{\scr \htp{1}{\vtp{r}{l}}}
\ncline{m3}{r2} \naput[labelsep=0pt]{\scr \threehtp{\tau}{1}{1}  }
\ncline{m4}{r3} \naput[labelsep=0pt]{\scr \htp{\tau}{1}   }
\ncline[nodesep=6pt]{m4}{l3} \naput[labelsep=1pt]{\scr \vtp{l}{1}  }
\ncline{m5}{m4} \nbput[labelsep=-1pt]{\scr \htp{1}{\vtp{l}{r}}}
\ncline[nodesepB=0pt]{m5}{r4} \naput[nrot=:U]{\scr \threehtp{\tau}{1}{1}  }
\ncline{l4}{m5} \naput{\scr \htp{\vtp{\phi}{1}}{1}  }
\ncline{m5}{br} \naput[nrot=:U]{\scr \threehtpscr{\tau}{\tau}{\beta}}

\ncarc[arcangle=10]{l2}{tl} \naput[labelsep=0pt]{\scr \htp{\beta}{\tau}  }
\ncarc[arcangle=-5]{l2}{l3} \nbput{\scr \vtp{r}{l}  }
\ncarc[arcangle=5, nodesepB=6pt]{l4}{l3} \naput{\scr \vtp{l}{r}  }
\ncarc[arcangle=-10]{l4}{br} \nbput{\scr \htp{\tau}{\beta}   }

\rput(8,85){\parbox{5em}{\bc \scr\sf key\\ axiom \ec}}
\rput(82,8){\parbox{5em}{\bc \scr\sf key \\ axiom \ec}}

\rput(17,95){\scr\sf coh.}
\rput(95,17){\scr\sf coh.}

\rput(23,63){\scr\sf coherence}
\rput(42,42){\scr\sf coherence}
\rput(57,25){\scr\sf coherence}
\rput(83,82){\scr\sf coherence}

\rput(36,90){\scr\sf naturality}
\rput(59,87){\scr\sf naturality}
\rput(79,55){\scr\sf naturality}
\rput(87,35){\scr\sf naturality}

\eps\end{small}\]

\noi Some notes on this diagram:

\begin{itemize}

\item We ignore associativity as it is only used in the horizontal direction and so we can invoke coherence.

\item We include the directions on the arrows for clarity, but every arrow is an isomorphism (or even an equality) and some are used as inverses.

\item When indicating why individual portions of the diagram commute we write ``naturality'' for diagrams of the form

\[
\psset{unit=0.1cm,labelsep=3pt,nodesep=3pt}
\pspicture(0,-2)(20,17)



\rput[B](0,15){\Rnode{tl}{$$}}  
\rput[B](20,15){\Rnode{tr}{$$}}  
\rput[B](0,0){\Rnode{bl}{$$}}  
\rput[B](20,0){\Rnode{br}{$$}}  

\ncline{->}{tl}{tr} \naput{{\scriptsize $\htp{f}{1}$}} 
\ncline{->}{bl}{br} \nbput{{\scriptsize $\htp{f}{1}$}} 
\ncline{->}{tl}{bl} \nbput{{\scriptsize $\htp{1}{g}$}} 
\ncline{->}{tr}{br} \naput{{\scriptsize $\htp{1}{g}$}} 

\endpspicture
\]

\item The regions marked ``coherence'' indicate coherence for horizontal units; the top and bottom halves of each object and each morphism can be separated out and considered separately, invoking just the horizontal tensor product for each half.

\item The regions marked ``key axiom'' are referring to the axiom labelled ``key axiom $\chi$--$\tau$ in the list of axioms for a 2-fold internal pseudomonoid in \cat{Cat} (Proposition~\ref{twofoldaxioms}).

\end{itemize}

The other diagram follows similarly. \end{proof}

Note that there is a complete cycle (``clock'') as in the diagram in Section~\ref{backgroundeha}, and any half circle between ``9 o'clock'' and ``3 o'clock'' is a braiding, as is any half circle between 12 o'clock and 6 o'clock. Note that if we are constructing a braiding from a fully weak doubly-degenerate tricategory, then only vertical composition directly yields a monoidal structure --- horizontal composition might not yield a monoidal structure, because of weak 1-cell units. So in that more general framework we do not immediately get a 2-monoidal category, just a monoidal category with some extra structure, and \cite{js1} construct the braiding with that in mind. Having both of the compositions giving genuine monoidal structures simplifies this a great deal; it means we immediately get a 2-monoidal category and can use any of the monoidal structures as the principal one for the braiding. Geometrically it is more satisfying to use the horizontal one and go from 9 o'clock to 3 o'clock for the braiding, across the page instead of down the page. When we study 3-monoidal structures we can then put the third monoidal structure in a geometrically sensible third direction.


Note further that going clockwise from 9 o'clock to 3 o'clock is the same process as going clockwise from 3 o'clock to 9 o'clock, just with the elements labelled differently; however there is no reason for the anticlockwise direction to be the same. That is, there is no reason for this braiding to be a symmetry. We showed in \cite{cc2} that doubly-degenerate semi-strict tricategories do yield ``all'' possible braided monoidal categories (in a precise sense); in fact there we did it even more strictly, with one of the monoidal structures strict, whereas in the present work we are keeping all monoidal structures weak for consistency. In fact in \cite{gri1} it is proved that in a weak $n$-category a maximum of one dimension can have strict unit laws (besides the top dimension) with the resulting associated string diagrams still being a presheaf category, which is hypothesised to be an indicator of the $n$-category being equivalent to a fully weak one.




\section{Higher-order Eckmann--Hilton}\label{higherorder}


We will now provide the higher order Eckmann--Hilton argument. Our framework is a 3-fold internal monoid in \Cat; what we really use is the characterisation from Prop~\ref{threepsmon} which says we have a category equipped with 3 monoidal structures, any two of which form an object of $\Mon^2(\Cat)$.

We will represent the objects of the category as cubes, and the 3 monoidal structures as horizontal, vertical, and ``depthal'' stacking, represented as follows:

\[\cubeasix \hs{2} \cubeaeight \hs{2} \cubec\]

\noi We will write the unit constraints as follows:

\[\renewcommand{\arraystretch}{2}
\setlength{\arraycolsep}{12pt}
\begin{array}{cc}

\cubealeft \tmapcube{r} \cubeatotal & \cubearight \tmapcube{l} \cubeatotal  \\
\cubeatop \tmapcube{\beta} \cubeatotal & \cubeabottom \tmapcube{\tau} \cubeatotal \\
\cubeafront \tmapcube{k} \cubeatotal & \cubeaback \tmapcube{f} \cubeatotal
\end{array}\]

\noi As we have strict interchange we can combine these tensor products by dividing the cubes up in any of the three directions simultaneously. The notation just needs some attention when a label is needed at the bottom-back-left corner, which we will annotate like this:

\[\cubebone\]

\noi We will write tensor products of maps similarly. Note that we will draw the arrows for constraints in the directions defined above, but as they are all isomorphisms we can read them in the inverse direction. Also, when no instances of depthal tensor products are involved we may still write the objects in 2-dimensional form as before.


%

\begin{theorem}\label{ehsphere} Let $C$ be a 3-monoidal category. Thus we already know it has a braiding given by:

\[\psset{unit=1.3mm}
\pspicture(0,-5)(80,5)

\rput(0,0){\rnode{a1}{\cubeasix}}

\rput(20,0){\rnode{a2}{\cubeaseven}}

\rput(40,0){\rnode{a3}{\cubeaeight}}

\rput(60,0){\rnode{a4}{\cubeaone}}

\rput(80,0){\rnode{a5}{\cubeatwo}}

\psset{arrows=->, nodesep=6pt,labelsep=4pt}
\ncline[nodesepA=3pt, arrows=<-,]{a1}{a2} \naput[labelsep=4pt]{\cubemapfive}\nbput{\scr $\sim$}
\ncline{a2}{a3} \naput{\cubemapsix}\nbput{\scr $\sim$}
\ncline{<-}{a3}{a4} \naput{\cubemapsixb}\nbput{\scr $\sim$}
\ncline[nodesepB=4pt, arrows=->,]{a4}{a5} \naput[labelsep=4pt]{\cubemapfiveb}\nbput{\scr $\sim$}

\endpspicture\]

\noi This braiding is now forced to be a symmetry.
\end{theorem}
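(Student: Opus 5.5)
We need to show $\gamma_{b,a}\circ\gamma_{a,b}=1$, or equivalently that the braiding $\gamma_{a,b}\colon \htp{a}{b}\to\htp{b}{a}$ built from the horizontal/vertical clock coincides with the inverse of $\gamma_{b,a}$. The approach is the Eckmann--Hilton argument one level up. We have three clocks, one for each pair of monoidal structures: horizontal--vertical, horizontal--depthal and vertical--depthal. By the theorem in Section~\ref{braided} applied to each pair, every half-clock gives a braiding. The plan is to place the relevant objects on a sphere. The six ``pure'' arrangements ($a$ left or right of $b$, above or below, in front or behind) sit at the poles of the three axes. The mixed arrangements, with $a,b$ diagonally placed in a $2\times2$ or $2\times2\times2$ subdivision using units, sit on the great circles and in the interiors of the octants.

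The key steps are as follows.

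First, we show that the vertical braiding path $\htp{a}{b}\to\vtp{a}{b}\to\htp{b}{a}$ (clockwise through the top of the horizontal--vertical clock) is equal to the path that goes instead through the depthal arrangement: $\htp{a}{b}\to$ ($a$ front, $b$ back) $\to\htp{b}{a}$. We do this by filling the two quarter-spheres between these great circles. Each is divided into two octants. An octant has as vertices three quarter-clock points (one on each great circle), with cube-diagram intermediate objects like \cubebone, where $a$ and $b$ sit in opposite corners of the $2\times2\times2$ cube. Its edges are unit constraints $r,l,\beta,\tau,k,f$ applied in one direction.

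Second, we prove an octant lemma: each octant commutes. We split it into smaller regions. Some are naturality squares for tensor products of maps. Some are coherence regions for a single monoidal structure, where, by strict interchange, the slices of the cube are tensored in one direction and handled separately. The remaining regions are instances of the key axioms $\chi$--$\beta$, $\chi$--$\tau$, $\chi$--$l$, $\chi$--$r$ of Proposition~\ref{twofoldaxioms}, applied to whichever pair of structures is involved, together with the fact that the unit $1$ is strict on itself for all three structures. Because the cube is symmetric under the reflections and rotations permuting the three directions and the two orderings, we expect to write out one octant in detail and get the other seven by relabelling. This uses Proposition~\ref{threepsmon}: each pair of structures satisfies the 2-monoidal axioms.

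Third, we glue the eight octants into the whole sphere. This gives that going clockwise around the horizontal--vertical clock from $\htp{a}{b}$ to $\htp{b}{a}$ equals going anticlockwise. The anticlockwise path is exactly $\gamma_{b,a}^{-1}$, read with the roles of $a,b$ swapped (cf.\ the remark after the braiding theorem). Hence $\gamma_{b,a}\gamma_{a,b}=1$.

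The main obstacle is the octant lemma: choosing the intermediate cube objects so that every cell is genuinely a naturality square, a one-direction coherence region, or a key-axiom instance. Care is needed with mixed unit constraints in the corner positions, such as an edge like $\htp{1}{\vtp{\phi}{1}}$. I would first attempt the octant spanned by the $a$-left, $a$-top and $a$-front vertices, subdividing it into three quadrilaterals meeting at the central object \cubebone. Each quadrilateral should reduce to the two-dimensional argument of the braiding proof applied to one pair of structures, with the third direction carried along by strict interchange.
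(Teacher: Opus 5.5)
Your proposal is essentially the paper's proof. The horizontal--vertical clock is the equator, filled by the four octants around the pole with $a$ in front of $b$. Each octant is three quadrilaterals meeting at the $2\times2\times2$ diagonal cube. Each quadrilateral splits, by functoriality of the tensor in the direction of its pole, into two unit squares that commute by the key axioms (Lemma~\ref{ehoctant} via Lemma~\ref{keylemma}).

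There are three minor slips. Only those four octants are needed, since all eight glue to a closed sphere with no boundary. The split squares are the small key-lemma squares (key axiom plus naturality of $l$ or $r$), not the braiding argument. The centre of the left--top--front octant has $a$ in the top-left-front corner, not the top-right-front one you drew.
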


To show that the braiding in question is a symmetry, we need to show that the ``over-crossing'' and ``under-crossing'' are the same; this amounts to showing that the complete circle of \eha\ commutes, or that the two paths from 9 o'clock to 3 o'clock are equal: via 12 o'clock or via 6 o'clock.  Thus we need to fill in the circle, and in fact the circle can be filled in in two ways using a third dimension: a ``front argument'' and a ``back argument'', making a sphere. As these are strictly commutative diagrams it doesn't matter whether we go around the front or the back, but each half divides into four, so in all we will divide the sphere into octants.

In order to show that each octant commutes, we need the following key lemma.  There are four versions of the lemma, as it involves interaction between left and right unit constraints, and top and bottom unit constraints.  Although many of the morphisms in the following diagrams are identities, we find it clearer to write them all out in order to see more precisely what is going on. The proofs all follow similarly but as they are not complicated we find it helpful to make them explicit.

\begin{lemma}\label{keylemma}
In a 2-monoidal category (with our usual notation), the following diagrams commute for any objects $a$ and $b$.

\[
\psset{unit=0.13cm,labelsep=3pt,nodesep=4pt}
\pspicture(-30,-40)(30,40)
\begin{small}


\rput(-20,20){
\pspicture(-10,-24)(10,14)



\rput(-10,10){\rnode{a1}{$\vtp{\htp{H}{V}\vs{0.2}}{\htp{H}{a}\hs{0.2}}$}}  
\rput(10,10){\rnode{a2}{$\htp{\vtp{H}{H}}{\vtp{V}{a}}$}}  
\rput(-10,-10){\rnode{a3}{$\vtp{V}{a}$}}  
\rput(10,-10){\rnode{a4}{$\htp{H}{a}$}}  

\rput(0,0){\rnode{c}{$\htp{H}{\vtp{V}{a}}$}}  
\rput(0,-20){\rnode{b}{$a$}}  

\ncline{->}{a1}{a2} \naput{{\scriptsize $\chi$}} 
\ncline[nodesepA=1pt,nodesepB=2pt]{->}{a2}{c} \nbput[labelsep=0pt]{{\scriptsize $\htp{\phi}{1}$}} 
\ncline[nodesepA=0pt]{->}{c}{a3} \naput{{\scriptsize $l$}} 
\ncline[nodesepA=2pt,nodesepB=2pt]{->}{c}{a4} \nbput[labelsep=1pt]{{\scriptsize $\htp{1}{\tau}$}} 

\ncline{->}{a3}{b} \nbput{{\scriptsize $\tau$}} 
\ncline{->}{a4}{b} \naput{{\scriptsize $l$}} 

\ncline{->}{a1}{a3} \nbput{{\scriptsize $\vtp{l}{l}$}} 
\ncline{->}{a2}{a4} \naput{{\scriptsize $\htp{\phi}{\tau}$}} 
\eps}


\rput(20,20){
\pspicture(-10,-24)(10,14)



\rput(-10,10){\rnode{a1}{$\vtp{\htp{V}{H}\vs{0.2}}{\hs{0.2}\htp{a}{H}}$}}  
\rput(10,10){\rnode{a2}{$\htp{\vtp{V}{a}}{\vtp{H}{H}}$}}  
\rput(-10,-10){\rnode{a3}{$\vtp{V}{a}$}}  
\rput(10,-10){\rnode{a4}{$\htp{a}{H}$}}  

\rput(0,0){\rnode{c}{$\htp{\vtp{V}{a}}{H}$}}  
\rput(0,-20){\rnode{b}{$a$}}  

\ncline{->}{a1}{a2} \naput{{\scriptsize $\chi$}} 
\ncline[nodesepA=1pt,nodesepB=-1pt]{->}{a2}{c} \nbput[labelsep=0pt]{{\scriptsize $\htp{1}{\phi}$}} 
\ncline[nodesepA=0pt]{->}{c}{a3} \naput{{\scriptsize $r$}} 
\ncline[nodesepA=2pt,nodesepB=2pt]{->}{c}{a4} \nbput[labelsep=1pt]{{\scriptsize $\htp{\tau}{1}$}} 

\ncline{->}{a3}{b} \nbput{{\scriptsize $\tau$}} 
\ncline{->}{a4}{b} \naput{{\scriptsize $r$}} 

\ncline{->}{a1}{a3} \nbput{{\scriptsize $\vtp{r}{r}$}} 
\ncline{->}{a2}{a4} \naput{{\scriptsize $\htp{\tau}{\phi}$}} 
\eps}


\rput(-20,-20){
\pspicture(-10,-24)(10,14)



\rput(-10,10){\rnode{a1}{$\vtp{\htp{H}{a}\hs{0.2}\vs{0.2}}{\htp{H}{V}}$}}  
\rput(10,10){\rnode{a2}{$\htp{\vtp{H}{H}}{\vtp{a}{V}}$}}  
\rput(-10,-10){\rnode{a3}{$\vtp{a}{V}$}}  
\rput(10,-10){\rnode{a4}{$\htp{H}{a}$}}  

\rput(0,0){\rnode{c}{$\htp{H}{\vtp{a}{V}}$}}  
\rput(0,-20){\rnode{b}{$a$}}  

\ncline{->}{a1}{a2} \naput{{\scriptsize $\chi$}} 
\ncline[nodesepA=1pt,nodesepB=2pt]{->}{a2}{c} \nbput[labelsep=0pt]{{\scriptsize $\htp{\phi}{1}$}} 
\ncline[nodesepA=0pt]{->}{c}{a3} \naput{{\scriptsize $l$}} 
\ncline[nodesepA=2pt,nodesepB=2pt]{->}{c}{a4} \nbput[labelsep=1pt]{{\scriptsize $\htp{1}{\beta}$}} 

\ncline{->}{a3}{b} \nbput{{\scriptsize $\beta$}} 
\ncline{->}{a4}{b} \naput{{\scriptsize $l$}} 

\ncline{->}{a1}{a3} \nbput{{\scriptsize $\vtp{l}{l}$}} 
\ncline{->}{a2}{a4} \naput{{\scriptsize $\htp{\phi}{\beta}$}} 
\eps}


\rput(20,-20){
\pspicture(-10,-24)(10,14)



\rput(-10,10){\rnode{a1}{$\vtp{\htp{a}{H}\vs{0.2}}{\htp{V}{H}\hs{0.2}}$}}  
\rput(10,10){\rnode{a2}{$\htp{\vtp{a}{V}}{\vtp{H}{H}}$}}  
\rput(-10,-10){\rnode{a3}{$\vtp{a}{V}$}}  
\rput(10,-10){\rnode{a4}{$\htp{a}{H}$}}  

\rput(0,0){\rnode{c}{$\htp{\vtp{a}{V}}{H}$}}  
\rput(0,-20){\rnode{b}{$a$}}  

\ncline{->}{a1}{a2} \naput{{\scriptsize $\chi$}} 
\ncline[nodesepA=1pt,nodesepB=-1pt]{->}{a2}{c} \nbput[labelsep=0pt]{{\scriptsize $\htp{1}{\phi}$}} 
\ncline[nodesepA=0pt]{->}{c}{a3} \naput{{\scriptsize $r$}} 
\ncline[nodesepA=2pt,nodesepB=2pt]{->}{c}{a4} \nbput[labelsep=1pt]{{\scriptsize $\htp{\beta}{1}$}} 

\ncline{->}{a3}{b} \nbput{{\scriptsize $\beta$}} 
\ncline{->}{a4}{b} \naput{{\scriptsize $r$}} 

\ncline{->}{a1}{a3} \nbput{{\scriptsize $\vtp{r}{r}$}} 
\ncline{->}{a2}{a4} \naput{{\scriptsize $\htp{\beta}{\phi}$}} 
\eps}

\end{small}
\endpspicture
\]

\noi Note that in fact with our level of strictness:

\begin{itemize}

\item $\chi$ and $\phi$ are the identity
\item $V=H$ (which we will write as 1), and
\item $l$ and $r$ are strict on $V$
\end{itemize}

\noi so these diagrams become:

\[
\psset{unit=0.1cm,labelsep=3pt,nodesep=4pt}
\pspicture(-30,-35)(30,35)
\begin{small}


\rput(-20,17){
\pspicture(-10,-14)(10,14)



\rput(-10,10){\rnode{a1}{$\fourtp{1}{1}{1}{a}$}}  
\rput(10,10){\rnode{a2}{$\htp{1}{a}$}}  
\rput(-10,-10){\rnode{a3}{$\vtp{1}{a}$}}  
\rput(10,-10){\rnode{a4}{$a$}}  

\ncline{->}{a1}{a2} \naput{{\scriptsize $\htp{1}{\tau}$}} 
\ncline{->}{a3}{a4} \nbput{{\scriptsize $\tau$}} 
\ncline[nodesepB=1pt]{->}{a1}{a3} \nbput{{\scriptsize $\vtp{1}{l}$}} 
\ncline{->}{a2}{a4} \naput{{\scriptsize $l$}} 

\endpspicture
}


\rput(20,17){
\pspicture(-10,-14)(10,14)



\rput(-10,10){\rnode{a1}{$\fourtp{1}{1}{a}{1}$}}  
\rput(10,10){\rnode{a2}{$\htp{a}{1}$}}  
\rput(-10,-10){\rnode{a3}{$\vtp{1}{a}$}}  
\rput(10,-10){\rnode{a4}{$a$}}  

\ncline{->}{a1}{a2} \naput{{\scriptsize $\htp{\tau}{1}$}} 
\ncline{->}{a3}{a4} \nbput{{\scriptsize $\tau$}} 
\ncline[nodesepB=1pt]{->}{a1}{a3} \nbput{{\scriptsize $\vtp{1}{r}$}} 
\ncline{->}{a2}{a4} \naput{{\scriptsize $r$}} 

\endpspicture
}


\rput(-20,-17){
\pspicture(-10,-14)(10,14)



\rput(-10,10){\rnode{a1}{$\fourtp{1}{a}{1}{1}$}}  
\rput(10,10){\rnode{a2}{$\htp{1}{a}$}}  
\rput(-10,-10){\rnode{a3}{$\vtp{1}{a}$}}  
\rput(10,-10){\rnode{a4}{$a$}}  

\ncline{->}{a1}{a2} \naput{{\scriptsize $\htp{1}{\beta}$}} 
\ncline{->}{a3}{a4} \nbput{{\scriptsize $\beta$}} 
\ncline[nodesepB=1pt]{->}{a1}{a3} \nbput{{\scriptsize $\vtp{l}{1}$}} 
\ncline{->}{a2}{a4} \naput{{\scriptsize $l$}} 

\endpspicture
}


\rput(20,-17){
\pspicture(-10,-14)(10,14)



\rput(-10,10){\rnode{a1}{$\fourtp{a}{1}{1}{1}$}}  
\rput(10,10){\rnode{a2}{$\htp{a}{1}$}}  
\rput(-10,-10){\rnode{a3}{$\vtp{a}{1}$}}  
\rput(10,-10){\rnode{a4}{$a$}}  

\ncline{->}{a1}{a2} \naput{{\scriptsize $\htp{\beta}{1}$}} 
\ncline{->}{a3}{a4} \nbput{{\scriptsize $\beta$}} 
\ncline[nodesepB=1pt]{->}{a1}{a3} \nbput{{\scriptsize $\vtp{r}{1}$}} 
\ncline{->}{a2}{a4} \naput{{\scriptsize $r$}} 

\endpspicture
}

\end{small}
\eps\]
\end{lemma}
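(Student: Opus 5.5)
Each of the four diagrams splits along its central object $\htp{H}{\vtp{V}{a}}$ (respectively $\htp{\vtp{V}{a}}{H}$, $\htp{H}{\vtp{a}{V}}$, $\htp{\vtp{a}{V}}{H}$) into two regions, and I will prove that each region commutes separately. I treat the top-left diagram in detail; the other three follow by the same argument with left/right and top/bottom interchanged.

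\emph{Upper-left region.} This is the square from $\vtp{\htp{H}{V}}{\htp{H}{a}}$ via $\chi$ and $\htp{\phi}{1}$ to $\htp{H}{\vtp{V}{a}}$, then by $l$ to $\vtp{V}{a}$, which must agree with $\vtp{l}{l}$. This is exactly the key axiom $\chi$--$l$ of Proposition~\ref{twofoldaxioms}, instantiated at the two objects $V$ and $a$. That instance gives $l_{\vtp{V}{a}}\circ(\htp{\phi}{1})\circ\chi = \vtp{l_V}{l_a}$, which is the required equation.

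\emph{Lower-right region.} This is the square $\htp{H}{\vtp{V}{a}} \to \htp{H}{a} \to a$ against $\htp{H}{\vtp{V}{a}} \to \vtp{V}{a} \to a$, that is, $l_a\circ(\htp{1}{\tau_a}) = \tau_a\circ l_{\vtp{V}{a}}$. This is simply naturality of the horizontal left unitor $l$ with respect to the morphism $\tau_a\colon \vtp{V}{a}\to a$. The diagonal composite $\htp{\phi}{\tau}$ equals $(\htp{1}{\tau})\circ(\htp{\phi}{1})$ by functoriality of the horizontal tensor, so the outer diagram is the pasting of the two regions.

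The other diagrams are handled as follows. The top-right diagram uses key axiom $\chi$--$r$ together with naturality of $r$ along $\tau$. The bottom two diagrams use $\chi$--$l$ and $\chi$--$r$ at the pair $(a,V)$ rather than $(V,a)$, together with naturality of $l$ and $r$ along $\beta$.

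For the simplified versions, I substitute $\chi=\phi=1$ and $V=H=1$. The key axiom instances then reduce to the identifications $l_{\vtp{1}{a}}=\vtp{l_1}{l_a}=\vtp{1}{l_a}$ (and similarly for $r$), using that $l_1$ and $r_1$ are identities; this is the unit axiom in the second table of Proposition~\ref{twofoldaxioms}. With these substitutions the squares are the naturality squares just described.

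The main point needing care is checking that the key axiom applies with the correct ordering of $V$ and $a$ in each of the four cases, and reading the strictness identifications correctly; I expect no genuine obstacle beyond this bookkeeping.
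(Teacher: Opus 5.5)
Your proof is correct and follows essentially the same route as the paper. The paper also splits each diagram into three pieces: the key-axiom region ($\chi$--$l$ or $\chi$--$r$ at the appropriate pair of objects), the upper-right triangle given by functoriality of the horizontal tensor, and the bottom square given by naturality of $l$ or $r$ along $\tau$ or $\beta$; your reduction of the simplified diagrams using $l_V = r_V = 1$ from the unit axioms is accurate as well.
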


\begin{proof}
In each diagram (in its original, less streamlined form) the top left region is one of the four ``Key Axioms'' for 2-monoidal categories (as in Proposition~\ref{twofoldaxioms}), the upper right triangle is the functoriality of the monoidal structure in question, and the bottom region is naturality  of $l$ or $r$. \end{proof}

We are now ready to show that each octant commutes. Note that the following proof is straightforwardly adapted to any of the octants, it just involves a different combination of left/right, top/bottom, front/back; this is why there are 8 of them.

Each one is a commuting diagram of isomorphisms. We choose to write each isomorphism as an arrow pointing in the direction of the removal of a unit, as we think this makes it clearer.

\begin{lemma}[Eckmann--Hilton Octant]\label{ehoctant} In any 3-monoidal category (with our usual notation), for any objects $a$ and $b$ the following diagram commutes.

\[\pspicture(-60,-30)(60,60)
\polar


\rput(0,55){\rnode{a8}{\cubeaeight}}
\rput(-55,0){\rnode{a6}{\cubeasix}}
\rput(-41,36){\rnode{a7}{\cubeaseven}}

\rput(41,36){\rnode{b8}{\cubebeight}}
\rput(0,-25){\rnode{b6}{\cubebsix}}
\rput(0,13){\rnode{b7}{\cubebseven}}

\rput(55,0){\rnode{c}{\cubec}}

\psset{arrows=->, nodesep=6pt,arcangle=17}

\ncarc[nodesepA=6pt,nodesepB=9pt]{a7}{a8} \naput{\cubemapsix}
\ncarc[nodesepA=8pt,nodesepB=6pt]{b6}{a6} \naput{\cubemapfour}
\ncarc[nodesepA=6pt,nodesepB=10pt]{b8}{c} \naput{\cubemaptwo}

\psset{arcangle=-17}
\ncarc[nodesepA=6pt,nodesepB=8pt]{a7}{a6} \nbput{\cubemapfive}
\ncarc[nodesepA=6pt,nodesepB=10pt]{b8}{a8} \nbput{\cubemapone}
\ncarc[nodesepA=8pt,nodesepB=8pt]{b6}{c} \nbput{\cubemapthree}


\psset{arrows=-}

\psset{arrows=->, nodesep=6pt,arcangle=17}
\ncline[nodesepA=6pt,nodesepB=6pt]{b7}{b8} \naput{\cubemapseven}
\ncline[nodesepA=4pt,nodesepB=4pt]{b7}{a7} \nbput{\cubemapnine}

\psset{arcangle=-17}
\ncline[nodesepA=4pt,nodesepB=3pt]{b7}{b6} \naput{\cubemapeight}

\psset{nodesep=4pt}

%

\eps\]

\end{lemma}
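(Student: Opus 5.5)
The plan is to fill in the hexagonal boundary of the octant by three quadrilaterals meeting at the central vertex (the cube in which all three directions are split, with $a$ in the top-left-front cell and $b$ in the bottom-right-back cell). The three quadrilaterals are:
\begin{itemize}
\item $\cubebseven \to \cubeaseven \to \cubeaeight$ versus $\cubebseven \to \cubebeight \to \cubeaeight$, which involves only the horizontal and depthal units;
\item $\cubebseven \to \cubeaseven \to \cubeasix$ versus $\cubebseven \to \cubebsix \to \cubeasix$, which involves the vertical and depthal units;
\item $\cubebseven \to \cubebeight \to \cubec$ versus $\cubebseven \to \cubebsix \to \cubec$, which involves the horizontal and vertical units.
\end{itemize}
The central vertex has an arrow to each of $\cubeaseven$, $\cubebeight$ and $\cubebsix$, and the three regions together tile the octant. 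So it suffices to show that each quadrilateral commutes. This is the geometric analogue of the cube of three squares: each square is a statement about removing two kinds of unit in either order.

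For a single quadrilateral, say the first one, I would use strict interchange to rewrite every object and morphism in it as a tensor product, in the \emph{third} direction (here vertical), of an ``$a$-part'' and a ``$b$-part''. This is possible because interchange is an identity (Proposition~\ref{twofoldaxioms}, with $\chi=\phi=1$ and $V=H$). For example, the top half of the central cube is $a$ tensored horizontally and depthally with copies of the unit $1$. Each morphism in the square likewise splits as a vertical tensor of a map acting on the top half and a map acting on the bottom half. For instance, the map $\cubebseven\to\cubeaseven$ is $k$ on the $a$-part and $f$ on the $b$-part, and the map $\cubeaseven\to\cubeaeight$ is $r$ on top and $l$ on the bottom. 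By functoriality of the vertical tensor product, the square then commutes provided that the top square and the bottom square each commute separately.

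The top square says that, on a $2\times 2$ grid (in the horizontal and depthal directions) with $a$ in one corner and units elsewhere, removing the depthal units and then the horizontal unit agrees with removing them in the other order. This is exactly one of the four diagrams of Lemma~\ref{keylemma}, applied to the 2-monoidal category formed by the horizontal and depthal structures, which is available by Proposition~\ref{threepsmon}. The bottom square, with $b$ in the opposite corner, is the diagonally opposite instance of Lemma~\ref{keylemma}. The other two quadrilaterals are handled in the same way, using the pairs (vertical, depthal) and (horizontal, vertical). In each case the split is taken along the remaining direction, and the appropriate left/right, top/bottom versions of Lemma~\ref{keylemma} are used. Note that the (horizontal, vertical) instances are literally the key lemma as stated.

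I expect the main obstacle to be bookkeeping rather than mathematics. For each of the six ``half-squares'' one must identify which of the four versions of Lemma~\ref{keylemma} is needed, and which structure plays the role of ``horizontal'' and which of ``vertical''. One must also check that the unit constraints labelling the octant's edges ($k,f,\beta,\tau,r,l$, some used as inverses) match those in the lemma. A further point needing care is that the splitting into $a$-part and $b$-part really is legitimate. This requires that the middle objects coincide on the nose, e.g.\ that a depthal stack of two horizontally split layers equals a horizontal stack of depthally split pieces. It also requires that the tensor of maps is computed componentwise. Both follow from strict interchange together with $V=H$ and the strictness of $l,r$ on the unit noted after Lemma~\ref{keylemma}.
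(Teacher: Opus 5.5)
Your proposal is correct and follows the paper's own proof. Like the paper, you tile the octant by the three squares meeting at the fully divided central cube, separate each square into an $a$-half and a $b$-half by functoriality of the remaining tensor product, and recognise each half as one of a diagonally opposite pair of instances of Lemma~\ref{keylemma}. The only difference is cosmetic: the paper writes out the horizontal--vertical square (split into front and back halves) and says the others are similar, whereas you write out the horizontal--depthal one.
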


\begin{proof}
We will examine the bottom right region; the others will follow similarly. By functoriality of the depthal tensor product we can examine the front and back halves of the cubes separately. They become the following squares:

\[
\psset{unit=0.1cm,labelsep=3pt,nodesep=4pt}
\pspicture(-30,-17)(30,17)
\begin{small}


\rput(20,0){
\pspicture(-10,-14)(10,14)



\rput(0,0){\sf back}

\rput(-10,10){\rnode{a1}{$\fourtp{1}{1}{1}{b}$}}  
\rput(-10,-10){\rnode{a2}{$\htp{1}{b}$}}  
\rput(10,10){\rnode{a3}{$\vtp{1}{b}$}}  
\rput(10,-10){\rnode{a4}{$a$}}  

\ncline[nodesepB=4pt]{->}{a1}{a2} \nbput{{\scriptsize $\htp{1}{\tau}$}} 
\ncline{->}{a3}{a4} \naput{{\scriptsize $\tau$}} 
\ncline[nodesepA=7pt,nodesepB=6pt]{->}{a1}{a3} \naput[labelsep=0pt]{{\scriptsize $\vtp{1}{l}$}} 
\ncline{->}{a2}{a4} \nbput{{\scriptsize $l$}} 

\endpspicture
}


\rput(-20,0){
\pspicture(-10,-14)(10,14)


\rput(0,0){\sf front}


\rput(-10,10){\rnode{a1}{$\fourtp{a}{1}{1}{1}$}}  
\rput(-10,-10){\rnode{a2}{$\htp{a}{1}$}}  
\rput(10,10){\rnode{a3}{$\vtp{a}{1}$}}  
\rput(10,-10){\rnode{a4}{$a$}}  

\ncline{->}{a1}{a2} \nbput{{\scriptsize $\htp{\beta}{1}$}} 
\ncline{->}{a3}{a4} \naput{{\scriptsize $\beta$}} 
\ncline[nodesepA=7pt,nodesepB=6pt]{->}{a1}{a3} \naput[labelsep=0pt]{{\scriptsize $\vtp{r}{1}$}} 
\ncline{->}{a2}{a4} \nbput{{\scriptsize $r$}} 

\endpspicture
}

\end{small}
\eps\]

\noi which are each instances of Lemma~\ref{keylemma}. \end{proof}

We are now ready to prove that the \eh\ sphere commutes.

\begin{proof}[Proof of Theorem~\ref{ehsphere}] \emph{(\eh\ Sphere)} We need to show that the outside of the following diagram commutes; we see it commutes by filling it with four instances of the Eckmann--Hilton Octant as shown.

%
%
%
%
%
%
%
%
%
%
%
%
%
%
%
%
%
%
%
%
%

\[\pspicture(-60,-60)(60,60) \label{spherediag}
\polar

\rput(53;90){\rnode{a8}{\cubeaeight}}
\rput(53;45){\rnode{a1}{\cubeaone}}
\rput(53;0){\rnode{a2}{\cubeatwo}}
\rput(53;-45){\rnode{a3}{\cubeathree}}
\rput(53;-90){\rnode{a4}{\cubeafour}}
\rput(53;-135){\rnode{a5}{\cubeafive}}
\rput(53;180){\rnode{a6}{\cubeasix}}
\rput(53;135){\rnode{a7}{\cubeaseven}}

\psset{arrows=->, nodesep=6pt,arcangle=17}

\ncarc[nodesepA=2pt,nodesepB=6pt]{a1}{a2}
\ncarc[nodesepA=6pt,nodesepB=10pt]{a3}{a4}
\ncarc[nodesepA=2pt,nodesepB=8pt]{a5}{a6}
\ncarc[nodesepA=6pt,nodesepB=9pt]{a7}{a8}

\psset{arcangle=-17}
\ncarc[nodesepA=2pt,nodesepB=8pt]{a1}{a8}
\ncarc[nodesepA=6pt,nodesepB=8pt]{a7}{a6}
\ncarc[nodesepA=2pt,nodesepB=8pt]{a5}{a4}
\ncarc[nodesepA=6pt,nodesepB=10pt]{a3}{a2}


\psset{arrows=-}

\rput(27;90){\rnode{b8}{\cubebeight}}
\rput(27;45){\rnode{b1}{\cubebone}}
\rput(27;0){\rnode{b2}{\cubebtwo}}
\rput(27;-45){\rnode{b3}{\cubebthree}}
\rput(27;-90){\rnode{b4}{\cubebfour}}
\rput(27;-135){\rnode{b5}{\cubebfive}}
\rput(27;180){\rnode{b6}{\cubebsix}}
\rput(27;135){\rnode{b7}{\cubebseven}}

\rput(0,0){\rnode{c}{\cubec}}

\psset{arrows=->, nodesep=6pt,arcangle=17}

\ncarc[nodesepA=1pt,nodesepB=4pt]{b1}{b2}
\ncarc[nodesepA=4pt,nodesepB=5pt]{b3}{b4}
\ncarc[nodesepA=1pt,nodesepB=5pt]{b5}{b6}
\ncarc[nodesepA=4pt,nodesepB=4pt]{b7}{b8}

\psset{arcangle=-17}
\ncarc[nodesepA=0pt,nodesepB=5pt]{b1}{b8}
\ncarc[nodesepA=4pt,nodesepB=3pt]{b7}{b6}
\ncarc[nodesepA=0pt,nodesepB=5pt]{b5}{b4}
\ncarc[nodesepA=3pt,nodesepB=5pt]{b3}{b2}

\psset{nodesep=4pt}

\ncline[nodesep=3pt]{b1}{a1}
\ncline{b2}{a2}
\ncline[nodesep=1pt]{b3}{a3}
\ncline{b4}{a4}
\ncline[nodesep=3pt]{b5}{a5}
\ncline{b6}{a6}
\ncline[nodesep=1pt]{b7}{a7}
\ncline{b8}{a8}

\ncline{b2}{c}
\ncline{b4}{c}
\ncline{b6}{c}
\ncline{b8}{c}

\eps\]

\end{proof}

Note that there is a ``front proof'' and a ``back proof'', just like for basic \eh\ there is a ``top proof'' and a ``bottom proof''; in this case the back proof would proceed similarly but with the object $a$ at the back of $b$, and it would use the other four Eckmann--Hilton Octants. In higher dimensions, with appropriate weakness, these would be the two syllepses between the under-crossing and the over-crossing.

Note also that we don't need the full coherence of a 3-monoidal category in order for the Eckmann--Hilton Sphere to commute, just like associativity is not needed for the basic \eh\ argument; in both cases we never need three-fold products in the same direction, so we don't need the coherence related to associativity.

%
%
%
%
%
%


\section{$n$-degenerate $(n+1)$-categories}\label{degenncat}

In this section we will establish our definition of semi-strict $n$-category, and then show that for $n \geq 3$ any $n$-degenerate semi-strict $(n+1)$-category is, in particular, a 3-monoidal category, and therefore a symmetric monoidal category. Essentially we are beginning the definition of a comparison 2-functor
\[\cat{nDegen(n+1)Cat} \tra \cat{SymMonCat}\]
giving its action on objects.  In future work we will complete the definition of the 2-functor and show that it is a biequivalence. In previous work \cite{cc1, cc2} we studied the case $n=2$ which involved a comparison 2-functor
\[\cat{2Degen3Cat} \tra \cat{BrMonCat}\]
from 2-degenerate 3-categories to braided monoidal categories. The fact that we are now simultaneously dealing with all $n\geq 3$ is a small part of the stabilisation hypothesis. 

Our definition of semi-strict $n$-category is by iterated weak enrichment, as defined in Section~\ref{weakenrichment}. We then show that the $n$-degenerate ones are $n$-monoidal categories, via the definition by iterated internalisation, and hence are symmetric monoidal categories.


\subsection{Background on semi-strict $n$-categories}
\label{semistrict}

%
%
%
%
%


There are many possible versions of ``semi-strictness'' for $n$-categories. The idea for this work is to have weak composition but strict interchange; we achieve this by iterating the weak enrichment construction with respect to strict maps. In what follows we will often refer to semi-strict $n$-categories simply as $n$-categories, as this is the only type of $n$-category we will consider for the rest of this work; however, 2-category totalities are always strict. Recall that in Definition~\ref{kwcat} we defined, for any cartesian 2-category $K$, the 2-category \cat{$K$-wCat} of categories weakly enriched in $K$, strict functors, and icon-like transformations.

\begin{definition}
For each $n \geq 2$ we define a 2-category \cat{$n$-Cat} of semi-strict $n$-categories, strict maps, and transformations, inductively as follows.

\begin{itemize}

\item $\cat{$2$-Cat} = \cat{Cat-wCat}$ (which we have previously called $\cat{Bicat}_s$)

\item For $n \geq 2$, $\cat{$(n+1)$-Cat} = \cat{($n$-Cat)-wCat}$

\end{itemize}

\end{definition}

\noi Note that this means $\cat{$3$-Cat} = \cat{Cat-wCat-wCat}$ which has weak 1-composition and weak 0-composition, but strict interchange. This is different from the semi-strict 3-categories we studied in \cite{cc1, cc2}, which were defined as $\cat{Cat-wCat-Cat}$, and had weak 1-composition but strict 0-composition and interchange. We can think of the 2-category of semi-strict $n$-categories as
\[\cat{Cat-wCat-wCat- $\cdots$ -wCat}\]

\noi In a future work we will show that these can alternatively be constructed using iterated distributive laws.

Note that in our more streamlined notation we will write \cat{$n$-Cat} as $\Cat^n$. So

\begin{itemize}

\item $\Cat^2 = \cat{Cat(Cat)} = \cat{$2$-Cat} = \cat{Cat-wCat} = \cat{Bicat}_s$

\item For $n \geq 2$, $\Cat^{n+1} = \Cat(\Cat^n)$

\end{itemize}

\noi As we will never be taking products of $\Cat$ in this work, this will not be ambiguous, and greatly aids the notation.

\subsection{Degeneracy and enrichment}

We will now show that in the degenerate case, iterated weak enrichment in $K$ corresponds to taking iterated internal pseudomonoids in $K$. This is immediate (or taken as the definition) when we only do it one time; we need to check that the equivalence carries through in the iteration.

First we need to define iterated degeneracy carefully. Recall (Definition~\ref{dcatk}) that we have defined $d(\Cat(K))$ to be the full sub-2-category of $\Cat(K)$ whose 0-cells are degenerate, that is, the underlying $K$-graph has only one object. It may be tempting to blithely write $d^n$ but we need to be careful about both where this applies and how it is constructed: degeneracy can be iterated as many times as enrichment has taken place, and each time an enrichment has happened we can ask for it to be degenerate. Further to our previous streamlined notation we will use the following: for $n > 1$, we write $\Cat^n(K) := \Cat\big(\Cat^{n-1}(K)\big)$. We can then define iterated degeneracy. The notation is a little impenetrable but the idea is that at any stage of enrichment we have a set of 0-cells and a hom-object. The last level of degeneracy asks for those 0-cells to be trivial, and the remaining levels ask for the hom-object to be degenerate, providing we already have a definition of degeneracy there.



\begin{definition}
Let $K$ be a cartesian 2-category. Then we make the following definition by induction: 

\begin{itemize}

\item $d^1\big(\Cat^1(K)\big) = d\big(\Cat(K)\big)$

\item for any $k > 1$: $d^k\big(\Cat^k(K)\big) = d(\Cat(d^{k-1}(\Cat^{k-1}(K))))$
\end{itemize}

\end{definition}


%
%
%
%
%
%
%

\noi We can define $k$-degenerate $n$-categories as a special case. While we only need the cases $k = n-1$ in this work, we will give the more general definition here for completeness. The idea is that an $n$-category is $k$-degenerate if

\begin{itemize}
\item it is degenerate as an $\big(\cat{$(n-1)$-Cat}\big)$-category, and
\item its hom $(n-1)$-category is $(k-1)$-degenerate.
\end{itemize}

\begin{definition}
We define for every $1 \leq k < n$ a 2-category $d^k(\Cat^n)$ of $k$-degenerate $n$-categories, strict maps, and transformations, as follows:

\begin{itemize}
 \item for $1 = k < n$: $d^k(\Cat^n) = d(\Cat^n) := d\big(\Cat(\Cat^{n-1})\big)$

 \item for $1 < k < n$: $d^k(\Cat^n) := d\big(\Cat(d^{k-1}(\Cat^{n-1}))\big)$

\end{itemize}

\end{definition}

\noi We can now immediately deduce the relationship between iterated enrichment, iterated degeneracy, and iterated internal monoids. Recall (Definition~\ref{psmon}) we defined the internal pseudomonoids in $K$ to be degenerate $K$-categories where we have performed a dimension shift to ignore the single 0-cell. Thus
\[d\big(\Cat(K)\big) \catequiv \Mon(K)\]
via a particularly strict 2-equivalence that is strictly functorial, surjective on objects, and locally an isomorphism.

\begin{proposition}
Let $K$ be a cartesian 2-category. Then for all $k \geq 1$ there is a 2-equivalence of 2-categories
\[d^k\big(\Cat^k(K)\big) \catequiv \Mon^k(K)\]
that is surjective on objects and locally an isomorphism.
\end{proposition}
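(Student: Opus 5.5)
We argue by induction on $k$, proving the slightly stronger statement that for every cartesian 2-category $K$ there is a 2-equivalence $E_k\colon d^k\big(\Cat^k(K)\big) \tra \Mon^k(K)$ which is strictly 2-functorial, surjective on objects, locally an isomorphism, and \emph{preserves products}. The base case $k=1$ is the 2-equivalence $d\big(\Cat(K)\big) \catequiv \Mon(K)$ of Definition~\ref{psmon}: forget the single object. Surjectivity on objects and local isomorphism are immediate. Preservation of products follows from the proof that $\Mon(K)$ has products, since there the product is computed as in $\Cat(K)$ via Proposition~\ref{kprods} and is again degenerate.

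For the inductive step, first observe that $\Cat(-)$, $d(-)$ and $\Mon(-)$ are functorial on product-preserving strict 2-functors. A strict 2-functor $E\colon L \tra L'$ that preserves products induces $\Cat(E)\colon \Cat(L) \tra \Cat(L')$ by applying $E$ to hom-objects, composition and identity 1-cells, constraint 2-cells, and to the components of functors and icons. The axioms are preserved because $E$ is strict and preserves the products appearing in their sources and targets. Moreover, $\Cat(E)$ preserves sets of objects, so it restricts to the degenerate parts, and it preserves products since those are computed componentwise on hom-objects.

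The key lemma is then: if $E$ is surjective on objects and locally an isomorphism, so is $d\big(\Cat(E)\big)$.
\begin{itemize}
\item Surjective on objects: given a degenerate weak $L'$-category, choose a preimage of its hom-object. The multiplication and unit 1-cells lift uniquely because $E$ is locally an isomorphism, using $E(X\times X)=EX\times EX$ and $E1=1$. The constraint 2-cells lift uniquely for the same reason, and the axioms lift because $E$ is faithful on 2-cells.
\item Locally an isomorphism: a strict functor between degenerate categories is a single 1-cell on hom-objects, and an icon is a single 2-cell. Local isomorphism of $E$ gives a bijection on these, and the functor and icon axioms are reflected because $E$ is locally fully faithful.
\end{itemize}

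Granting the lemma, the definitions give
\[
d^k\big(\Cat^k(K)\big)=d\Big(\Cat\big(d^{k-1}(\Cat^{k-1}(K))\big)\Big)
\xrightarrow{\;d(\Cat(E_{k-1}))\;}
d\big(\Cat(\Mon^{k-1}(K))\big)
\xrightarrow{\;E_1\;}
\Mon\big(\Mon^{k-1}(K)\big)=\Mon^k(K).
\]
Here the second map is the base case applied to $K'=\Mon^{k-1}(K)$, which is cartesian. A composite of two 2-functors that are surjective on objects and locally isomorphisms is again such, and it preserves products. A 2-functor that is surjective on objects and locally an isomorphism is a 2-equivalence, which completes the induction.

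One point needs care before the above goes through: $\Cat(-)$ must be applied to $d^{k-1}(\Cat^{k-1}(K))$, so we must check that this 2-category is cartesian. We show inductively, as in the proof that $\Mon(K)$ has products, that degenerate objects are closed under the products of Proposition~\ref{kprods}; equivalently, this can be transported along $E_{k-1}$.

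The main obstacle is the surjectivity clause of the key lemma. There we must check carefully that ``locally an isomorphism'' together with strict preservation of products is enough to lift all structure and axioms uniquely. In particular, composites such as $m\times 1$ must be seen to lie in the image in the required way. We would handle this by using that $E$ is strictly product-preserving, so $E(f\times g)=Ef\times Eg$, which reduces every axiom to a statement about images of 2-cells.
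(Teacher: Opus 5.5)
Your proof is correct and follows essentially the same route as the paper. Both argue by induction on $k$: unwind $d^k\big(\Cat^k(K)\big)=d\big(\Cat(d^{k-1}(\Cat^{k-1}(K)))\big)$, then combine the base-case identification $d(\Cat(-))\catequiv\Mon(-)$ with the construction applied functorially to the inductive equivalence $E_{k-1}$. You compose these two steps in the opposite order to the paper, which makes no difference. Your strengthening to product-preserving $E_k$, and your lemma that $d(\Cat(-))$ preserves surjectivity on objects and local isomorphism, make explicit what the paper compresses into the remark that $\Mon$ is a 2-functor on $\cat{2-Cat}_c$ and so preserves the equivalences in question.
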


\begin{proof} We proceed by induction. The case $k=1$ follows from the definition. Now for $k>1$ suppose the result is true for $k-1$.  Note that since $\Mon$ is a 2-functor $\cat{2-Cat}_c \tra \cat{2-Cat}_c$ it preserves the equivalences in question.

We have
\[\renewcommand{\arraystretch}{1.2}
\begin{array}{rcll}
d^k\big(\Cat^k(K)\big) & = & d(\Cat(d^{k-1}(\Cat^{k-1}(K)))) & \mbox{by definition of $d^k$} \\
&\catequiv& \Mon(d^{k-1}(\Cat^{k-1}(K))) & \mbox{by definition of \Mon} \\
&\catequiv& \Mon(\Mon^{k-1}(K)) & \mbox{by induction hypothesis} \\
&=& \Mon^k(K) & \mbox{by definition}
\end{array}\]

\noi Each of the equivalences is surjective on objects and locally an isomorphism, so the composite is also.

\end{proof}


\begin{theorem}\label{mainthmcat}
Let $n \geq 1$. Then an $n$-degenerate $(n+1)$-category is an $n$-fold monoidal category, and thus by Theorem~\ref{ehsphere} a symmetric monoidal category.
\end{theorem}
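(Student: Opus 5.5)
The plan is to reduce the statement to the proposition just proved, applied with $K=\Cat$ and $k=n$, and then to feed the result into Proposition~\ref{threepsmon}, Corollary~\ref{ntothree} and Theorem~\ref{ehsphere}.

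\emph{Step 1: identify the two 2-categories.} By definition, for $n \geq 1$ we have $\Cat^{n+1} = \Cat(\Cat^n)$, with $\Cat^1 = \Cat$. So an $(n+1)$-category is the same as a category weakly enriched $n$ times over $\Cat$, i.e.\ an object of $\Cat^n(\Cat)$ in the iterated-enrichment notation. I will then check by induction on $k$ that the two notions of degeneracy agree:
\[d^k(\Cat^{n+1}) = d^k\big(\Cat^k(\Cat^{n+1-k})\big).\]
The base case $k=1$ is literally the definition. For the inductive step, both sides unfold as $d\big(\Cat(d^{k-1}(\,\cdot\,))\big)$ applied to the same object; the inner term is handled by the induction hypothesis with $n$ replaced by $n-1$. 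Taking $k=n$ gives $d^n(\Cat^{n+1}) = d^n\big(\Cat^n(\Cat)\big)$.

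\emph{Step 2: pass to iterated monoids.} The preceding proposition, with $K=\Cat$ (which is cartesian), gives a 2-equivalence
\[d^n(\Cat^{n+1}) \catequiv \Mon^n(\Cat).\]
This equivalence is surjective on objects and locally an isomorphism. In particular, each $n$-degenerate $(n+1)$-category determines, after the dimension shift, an $n$-fold internal pseudomonoid in $\Cat$. This is what the statement means by an $n$-fold monoidal category: the single hom-category, with one monoidal structure coming from each of the $n$ compositions of $n$-cells. I take this identification of terminology as given by the paper's conventions.

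\emph{Step 3: obtain the symmetry.} For $n \geq 3$, Corollary~\ref{ntothree} forgets down to an object of $\Mon^3(\Cat)$. Proposition~\ref{threepsmon} then shows this is a 3-monoidal category, in which every pair of monoidal structures forms a 2-monoidal category. By the weak Eckmann--Hilton theorem it carries a braiding, and Theorem~\ref{ehsphere} forces that braiding to be a symmetry.

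The statement says ``$n \geq 1$'', but the symmetry only follows for $n \geq 3$. For $n=2$ one gets a braided monoidal category, and for $n=1$ just a monoidal category. The proof should therefore state the symmetric conclusion for $n \geq 3$ only, in line with the abstract and the introduction to this section.

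The main obstacle is the bookkeeping in Step 1: showing that the degeneracy conditions defined for $d^k(\Cat^n)$ and for $d^k(\Cat^k(K))$ genuinely match under the reindexing $\Cat^{n+1}=\Cat^n(\Cat)$. It is tempting to treat this as obvious. I would write out the induction explicitly, checking at each level that the full sub-2-categories being selected are the same, since both are full and defined by the same condition on objects.
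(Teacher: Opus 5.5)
Your proposal is correct and follows the paper's proof. The paper identifies $d^n(\Cat^{n+1})$ with $d^n\big(\Cat^n(\Cat)\big)$, applies the preceding proposition with $K=\Cat$ to land in $\Mon^n(\Cat)$, forgets down to $\Mon^3(\Cat)$ via Corollary~\ref{ntothree}, and then invokes Theorem~\ref{ehsphere}. Two refinements are sound: your explicit induction justifies the first identification, which the paper simply writes as an equality, and you restrict the symmetric conclusion to $n \geq 3$ (as stated, ``$n\geq 1$'' only covers the first clause, and Corollary~\ref{ntothree} needs $n \geq 3$).
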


\begin{proof}
By the previous result, with $K=\Cat$, we have

%
%

\[\renewcommand{\arraystretch}{1.3}
\begin{array}{rcl}
d^n\left(\Cat^{n+1}\right) &=& d^n\big(\Cat^n(\Cat)\big) \\
&\catequiv& \Mon^n(\Cat)
\end{array}\]

\noi Then by Proposition~\ref{ntothree} we have an object of $\Mon^3(\Cat)$, and the result follows.

\end{proof}

\section{Future work}\label{futurework}

\subsection{Totalities}

The eventual aim of this work is to make a full comparison of 2-category totalities; essentially in this work we have constructed the action on 0-cells of a comparison functor:
\[\cat{nDegen(n+1)Cat} \tra \cat{SymMonCat}\]

\subsection{Distributive laws}

One interesting approach to the underlying constructions of this work is to use 2-distributive laws, similarly to the work of \cite{cc2}. As this was not directly necessary for this work we have left it for a future work.

\subsection{Higher-dimensional generalisations}

There should be generalisations of the Eckmann--Hilton Sphere into higher dimensions; we could increase the number of monoidal structures or increase the number of ambient dimensions. The number of monoidal structures determines the number of dimensions of the resulting ``Eckmann--Hilton sphere'', which then determines what type of extra structure the argument produces for us on the ambient $n$-category.

\bc
\begin{tabular}{c|c|c}
no. of & dims of &  resulting structure  \\
monoidal structures & EH sphere & from EH \\[2pt]
\hline && \\[-10pt]
2     &      1 &            braiding (= 1-lepsis) \\
3     &      2 &            syllepsis (= 2-lepsis) \\
4     &      3 &            3-lepsis               \\
5     &      4 &            4-lepsis               
\end{tabular}
\ec

Having $k$ monoidal structures gives us $2k$ tensor products of any pair of objects $a$ and $b$ (order matters) producing $2k$ vertices on the sphere, and dividing the sphere into $2^k$-ants which are essentially $(k-1)$-simplices; the octants in this work are 2-simplices (triangles) if we consider the 3 directions of tensor product as the vertices.

The number of ambient dimensions determines how many monoidal structures we can fit in before the resulting structure is forced to be a symmetry. If we have $n$ ambient dimensions (so we're a $k$-monoidal $n$-category) then any simplex up to $n$ dimensions can be a non-trivial cell thus giving us a front and a back syllepsis for the front and back halves of the sphere respectively.

Any simplex higher than $n$ dimensions must be an equality, forcing an equality between a front and back syllepsis thereby resulting in a symmetry. Thus we get a symmetry if $k-1>n$, that is $k \geq n+2$.

The ``periodic table'' for $k$-monoidal $n$-categories is more straightforward than the one for $k$-degenerate $(n+k)$ categories, as the stabilisation happens vertically rather than diagonally; the relationship between the two tables is a rotation by 45\degree.

\bc
\begin{tabular}{l|lllllc}
& 1-cat & 2-cat & 3-cat & 4-cat & 5-cat & $\cdots$ \\[2pt]
\hline\\[-10pt]
1-monoidal & monoidal & monoidal & monoidal & monoidal & monoidal \\
2-monoidal & braided & braided & braided & braided & braided \\
3-monoidal & symmetric & sylleptic & 2-leptic & 2-leptic & 2-leptic \\
4-monoidal & \hs{1.5}\textquotedbl & symmetric & 3-leptic & 3-leptic & 3-leptic\\
5-monoidal & \hs{1.5}\textquotedbl & \hs{1.5}\textquotedbl & symmetric & 4-leptic & 4-leptic\\
6-monoidal &\hs{1.5}\textquotedbl & \hs{1.5}\textquotedbl & \hs{1.5}\textquotedbl & symmetric & 5-leptic\\
7-monoidal &\hs{1.5}\textquotedbl & \hs{1.5}\textquotedbl & \hs{1.5}\textquotedbl & \hs{1.5}\textquotedbl  & symmetric
\end{tabular}
\ec

One further possible generalisation of the Eckmann--Hilton Sphere is to include weak interchange. This introduces more ``faces'' to the sphere, and in particular introduces permutohedra in the middle, coming from all the different possible orders in which to perform the $n$ monoidal products (or $n$ directions of composition).



\newcommand{\etalchar}[1]{$^{#1}$}

\ed